\documentclass[11pt,a4paper]{amsart}
\usepackage[toc,page]{appendix}
\usepackage{a4}
\usepackage[active]{srcltx}
\usepackage{amsmath,bbm}
\usepackage{amssymb}
\usepackage{amsfonts}
\usepackage{MnSymbol}
\usepackage{mathrsfs} 
\usepackage{graphicx}
\usepackage{color}
\usepackage{soul}\setstcolor{red} 
\usepackage[colorlinks=true,urlcolor=blue,
citecolor=red,linkcolor=blue,linktocpage,pdfpagelabels,bookmarksnumbered,bookmarksopen]{hyperref}

\topmargin -1.0cm
\oddsidemargin 0in
\evensidemargin 0in
\textwidth 6.3 truein
\textheight 9.1 truein


\def\a{\alpha}
\def\b{\beta}
\def\d{\delta}

\def\g{\gamma}
\def\k{\kappa}

\def\e{\varepsilon}

\def\Om{\Omega}






\newcommand{\cA}{{\mathcal A}}
\newcommand{\cB}{{\mathcal B}}

\newcommand{\cF}{{\mathcal F}}

\newcommand{\cK}{{\mathcal K}}
\newcommand{\cJ}{{\mathcal J}}

\newcommand{\cN}{{\mathcal N}}
\newcommand{\cC}{{\mathcal C}}





\newcommand{\p}{\partial}

\newcommand{\R}{{\mathbb R}}

\newcommand{\ov}{\overline}

\newcommand{\mres}{\mathbin{\vrule height 1.6ex depth 0pt width
0.13ex\vrule height 0.13ex depth 0pt width 1.3ex}}

\newcommand{\sL}{\mathscr{L}}

\newcommand{\sM}{{\mathscr M}}


\def\dd{\,{\rm d}}

\newcommand{\one}{\mathbbm{1}}


\newcommand{\ds}{\displaystyle}

\newcommand{\diver}{{\rm{div}}}
\newcommand{\dist}{{\rm{dist}}}

\newcommand{\iin}{{\rm{in}}}
\newcommand{\oon}{{\rm{on}}}

\newcommand{\spt}{{\rm{spt}}}


\makeatletter
\newcommand{\mathleft}{\@fleqntrue\@mathmargin0pt}
\newcommand{\mathcenter}{\@fleqnfalse}
\makeatother

\newtheorem{remark}{\textbf{Remark}}[section]
\newtheorem{theorem}{\textbf{Theorem}}[section]
\newtheorem{lemma}[theorem]{\textbf{Lemma}}
\newtheorem{corollary}[theorem]{\textbf{Corollary}}
\newtheorem{proposition}[theorem]{\textbf{Proposition}}

\numberwithin{equation}{section}

\title[Variational formulation of some stationary mean field games]{On the variational formulation of some stationary second order mean field games systems}
\author[A.R. M\'esz\'aros]{Alp\'ar Rich\'ard M\'esz\'aros}  
\address{Department of Mathematics, UCLA, 520 Portola Plaza, Los Angeles, CA 90095, USA}
\email{alpar@math.ucla.edu} 

\author[F. J. Silva]{Francisco J. Silva} 
\address{Institut de recherche XLIM-DMI, Universit\'e de Limoges, 87060 Limoges, France}
\email{francisco.silva@unilim.fr}

\date{\today}
\thanks{
{\it 2010 AMS Subject Classification:} 49K20; 35J47; 35J50; 49N70; 91A13\\
{\it Keywords and phrases:} Mean Field Games; weak solutions;  variational formulation;  density constraints; general couplings}

\begin{document}
\maketitle
\begin{abstract}  We consider the variational approach to prove the existence of solutions of second order stationary Mean Field Games on a bounded domain $\Om\subseteq \R^{d}$, with Neumann boundary conditions, and with and without density constraints. We consider Hamiltonians which growth as $|\cdot|^{q'}$, where $q'=q/(q-1)$ and $q>d$. Despite this restriction, our approach allows us to prove  the existence of solutions in the case of rather general coupling terms. When   density constraints are taken into account,  our results improve those in \cite{MesSil}.  Furthermore, our approach can be used to obtain solutions of systems with multiple populations. 
\end{abstract}
\section{Introduction}
In this article we consider the stationary Mean Field Game (MFG) system 
\begin{equation}\label{eq:MFG_1}
\left\{
\begin{array}{rll} 
- \Delta u+ H(\cdot,\nabla u) +\lambda &= f(x,m) &  \iin\ \Om, \\[6pt]
\nabla u\cdot n &= 0  & \oon\ \partial \Om, \\[6pt]
-\Delta m - \diver\left(m \nabla_\xi H(\cdot,\nabla u)\right)  &= 0,  & \iin\ \Omega, \\[6pt]
(\nabla m+m \nabla_\xi H(\cdot,\nabla u))\cdot n &= 0,   & \oon\ \partial\Omega,\\[6pt]
\ds\int_\Omega m\dd x=1,&     m(x)> 0 &  \iin\ \ov{\Omega},
\end{array}
\right.\tag{MFG$_{1}$}
\end{equation}
introduced by Lasry and Lions in \cite{LasLio06i,LasLio07}, which models   the equilibrium configuration of an ergodic stochastic symmetric differential game  with a continuum of small,  indistinguishable  players (see e.g. \cite{MR3127148}). In the above system, $\Omega \subseteq \R^{d}$ is a bounded domain with a smooth boundary, $n$ is the outward normal to $\p\Omega$, $f: \Omega \times [0,+\infty[\to \R$ is the so-called {\it local coupling function},  $\Omega \times \R^d\ni (x,\xi) \mapsto H(x,\xi) \in \R$ is the {\it Hamiltonian}, and the variables $m: \Omega \to   [0,+\infty) $, $u: \Omega \to \R$ and $\lambda \in \R$ represent the stationary equilibrium configuration of the players, the equilibrium cost of a typical player, and the ergodic constant, respectively. We have taken Neumann boundary conditions but our results admit natural versions for the more standard case of periodic boundary conditions.

 An interesting feature of the solutions of \eqref{eq:MFG_1} is their connection with the long time behaviour of the solutions of time-dependent MFGs. We refer the reader to \cite{CarLasLioPor2,CarLasLioPor} for some results justifying rigorously this relation in some special cases.  The numerical resolution of stationary second order MFGs has been studied in \cite{AchdouCapuzzo10,Cacace_camilli_16,Almulla2016,briceno_kalise_silva}. 

Existence and uniqueness results for system \eqref{eq:MFG_1} have been investigated by several   researchers  using Partial Differential Equation (PDE) techniques, starting with the first papers \cite{LasLio06i,LasLio07} in the framework of weak solutions. The reader is referred to \cite{MR3333058,ferreiragomes15,MR3530210,Cir} for other subsequent results on the existence of weak solutions. In addition, under  different assumptions on the coupling function $f$ and the growth of $H$, the existence, and uniqueness, of smooth solutions have been analysed in \cite{MR3160525,MR3415027,pimentel2015regularity}. See also \cite{MR2928381}, where several a priori estimates for smooth solutions of stationary second order MFGs are established.  

In this article  we focus our attention on the proof of the existence of weak solutions of \eqref{eq:MFG_1} by variational techniques. Indeed, as pointed out already in \cite{LasLio07},  system   \eqref{eq:MFG_1} can be seen, formally, as the first order optimality condition of an associated variational problem, involving a PDE   constraint for the variable $m$. It turns out that $u$ and $\lambda$ in  \eqref{eq:MFG_1} correspond to the Lagrange multipliers associated to the PDE constraint for $m$ and the condition $\int_{\Om}m(x) \dd x=1$, respectively. Given $q>d\geq 2$, where $d$ is the space dimension, and setting $q':=q/(q-1)$, we prove the above assertion for Hamiltonians $H$ growing as $|\cdot|^{q'}$. Even if this growth condition is restrictive, but crucial for our arguments, the main interest of this variational technique is that it allows to prove the existence of weak solutions of \eqref{eq:MFG_1} for a rather general class of coupling functions $f$ in a straightforward manner. Indeed, as we will show in Section \ref{variational_problems}, $f$ does not need to be monotone (see also \cite{Cir, CirGomPimSan} for some recent results in this direction) and, moreover, we can prove the existence of solutions of   variations of system \eqref{eq:MFG_1} involving couplings  which can also depend  on the distributional derivatives of $m$. As a matter of fact, our results are valid, for terms in the r.h.s. of the first equation in \eqref{eq:MFG_1} which can be identified with the derivative of a   function $\cF: W^{1,q}(\Om) \to \R$  which is G\^ateaux dfferentiable  and weakly  lower-semicontinuous. 

Our approach follows closely the one in \cite{MesSil}, which   considers  in addition a density constraint in order to model strong congestion effects (see \cite{filippo,MR3556062}). In that article, the existence of solutions $(m,u,p,\mu,\lambda)\in W^{1,q}(\Om)\times W^{1,q'}(\Om)\times  \sM(\ov{\Omega})\times \sM(\ov{\Omega} ) \times \R$, where $  \sM(\ov{\Omega}) $ is the set of Radon measures on $\ov{\Omega}$, to the system 
\begin{equation}\label{eq:MFG_2_previous_article}
\left\{
\begin{array}{rll} 
- \Delta u+ \frac{1}{q'}|\nabla u|^{q'}+\mu- p+ \lambda &= f(x,m) &  \iin\ \Om, \\[6pt]
\nabla u\cdot n &= 0  & \oon\ \partial \Om, \\[6pt]
-\Delta m - \diver\left( |\nabla u|^{q'-2} \nabla u m\right)  &= 0,  & \iin\ \Omega, \\[6pt]
 \nabla m\cdot n &= 0,   & \oon\ \partial\Omega,\\[6pt]
\ds\int_\Omega m\dd x=1,&   0 \leq m(x)\le 1 \hspace{0.5cm}  &  \iin\ \ov{\Omega},\\[8pt]
 \mbox{{\rm spt}}(p) \subseteq \{ x\in \ov{\Om} \; ; \; m(x)=  1\},  &  p \geq 0 &  \iin\ \ov{\Omega},\\[8pt]
  \mbox{{\rm spt}}(\mu) \subseteq \{ x\in \ov{\Om} \; ; \; m(x)= 0\},  &  \mu \geq 0 &  \iin\ \ov{\Omega},
\end{array}
\right. 
\end{equation}
is established if $f(x,\cdot)$ is non-decreasing.  If $q>d$, the  result is proved with  a variational approach. On the other hand,  when $1 <q \leq d$ a penalization argument allows to prove the existence also in this case. In the present article,  when $q>d$, we improve the results in \cite{MesSil}, and we show the existence of solutions of   
\begin{equation}\label{eq:MFG_2}
\left\{
\begin{array}{rll} 
- \Delta u+ H(\cdot,\nabla u)- p+ \lambda &= f(x,m) &  \iin\ \Om, \\[6pt]
\nabla u\cdot n &= 0  & \oon\ \partial \Om, \\[6pt]
-\Delta m - \diver\left(m \nabla_\xi H(\cdot,\nabla u)\right)  &= 0,  & \iin\ \Omega, \\[6pt]
(\nabla m+m \nabla_\xi H(\cdot,\nabla u))\cdot n &= 0,   & \oon\ \partial\Omega,\\[6pt]
\ds\int_\Omega m\dd x=1,&   0 < m(x)\le \k(x) \hspace{0.5cm}  &  \iin\ \ov{\Omega},\\[8pt]
 \mbox{{\rm spt}}(p) \subseteq \{ x\in \ov{\Om} \; ; \; m(x)=  \kappa(x)\},  &  p \geq 0 &  \iin\ \ov{\Omega}.
\end{array}
\right.\tag{MFG$_{2}$}
\end{equation}
Note that in \eqref{eq:MFG_2} a more general Hamiltonian is considered and the density constraint $m\leq 1$ is replaced by $m \leq \kappa$, where $\kappa \in W^{1,q}(\Omega)$. Most importantly, $f$ does not need to be monotone and, using the Harnack's inequality proved in \cite{MR0369884} (see also \cite{MR3443169}) for elliptic equations in divergence form, we show that the density $m$ is strictly positive, which implies that $\mu$ in \eqref{eq:MFG_2_previous_article} is identically zero. Using the existence of solutions of the variational problem associated to \eqref{eq:MFG_2}, which can be proved easily, we prove that the variational problem associated to  \eqref{eq:MFG_1} admits at least one solution. This crucial fact  is the key to show  the existence of solutions of \eqref{eq:MFG_1}.

The paper is organized as follows. In Section \ref{prelim} we begin with some preliminaries which allow us to characterize the subdifferential of the cost functionals appearing in the optimization problems associated to \eqref{eq:MFG_1}-\eqref{eq:MFG_2}. This analysis extends the one in \cite[Section 2]{MesSil}. Section \ref{variational_problems} is the core of the article. We prove the existence of solutions of the variational problems associated to \eqref{eq:MFG_1}-\eqref{eq:MFG_2} and we establish the corresponding optimality conditions, which provide the existence of solutions of  \eqref{eq:MFG_1}-\eqref{eq:MFG_2}. We present a detailed discussion concerning the generality of the coupling term, which, as we have explained before, is the main feature of this approach. We also prove, by a bootstrapping argument, additional regularity for the weak solutions.   In Section \ref{multipopulation_section}, we present some simple applications of our results to the study of multi-populations MFG systems (see e.g.  \cite{MR3333058,MR3530210}). Finally, in the appendix, we prove the strict positivity of the densities   $m$  appearing in  \eqref{eq:MFG_1}-\eqref{eq:MFG_2} as a consequence of the Harnack's inequality in \cite{MR0369884} and the assumed regularity of the boundary $\partial \Omega$.

\section{Preliminary results}\label{prelim} 
In the entire article, we will assume that $\Omega \subseteq \R^{d}$ ($d\ge 2$) is a non-empty, bounded open set with a $C^{1,1}$ boundary $\partial \Omega$. This regularity assumption is equivalent to a uniform interior and exterior ball condition (see for instance \cite[Theorems 1.8-1.9]{Dalphin}) and allows us to use the classical Sobolev inequalities. 
The vector $n$ will denote the outward normal to $\p\Omega$.  Given $r \in [1,+\infty]$ and $\ell \in \mathbb{N}$ we will denote by $\|\cdot \|_{r}$ and $\| \cdot\|_{\ell,r}$ the standard norms in $L^{r}(\Om)$ (or in $L^{r}(\Om)^d$)  and $W^{\ell,r}(\Om)$, respectively.

Let $q>d \; \geq 2$. Our aim in this section is to provide  a characterization of the subdifferential of the convex functional
$\cB_q: W^{1,q}(\Omega)\times L^{q}(\Om)^{d}\to \ov\R \; := \R \cup \{+\infty\}$,  defined as 
\begin{equation}\label{deflqcal} 
\cB_q(m,w):= \int_{\Omega} b_q(x,m(x), w(x))\dd x,
\end{equation}  
where  
$b_q:\Om\times\R\times\R^d\to\ov\R$ is given by 
\begin{equation}\label{deflq}b_q(x,m, w):= \left\{\begin{array}{ll} 
\ds m H^*(x,-w/m) & \mbox{if } \; m>0, \\[6pt]	
		0 & \mbox{if } (m,w)=(0,0), \\[6pt]
		+\infty & \mbox{otherwise}.\end{array}\right.\end{equation}	
In view of the assumptions below, the function $\cB_q$ is well defined and convex (see Remark \ref{Bqwelldefined} and Theorem \ref{thm:subdiff}). It will appear in the cost functional of an optimization problem whose first order optimality condition  has the form of an MFG system. 
In \eqref{deflq},  for every $x\in \Omega$ the function $H^{\ast}(x,\cdot): \R^d \to \R$  is  the Legendre-Fenchel transform of $H(x,\cdot)$, where $H: \Omega \times \R^d\to \R$ is a continuous function, that we will call the {\it Hamiltonian},  which is assumed to be  strictly convex and differentiable in its second variable and satisfies a polynomial growth condition in terms of $q':= q/(q-1)$:
there exist $C_1$, $C_2>0$ such that
\begin{equation}\label{hyp:1}
\frac{1}{q'C_1} |\xi|^{q'}-C_2\le H(x,\xi)\le \frac{C_1}{q'}|\xi|^{q'}+C_2, \forall\, x\in\Om,\;\;\; \xi\in\R^d.
\end{equation} 
Using the definition of $H^{\ast}(x,\cdot)$, an easy computation shows that \eqref{hyp:1} implies   
\begin{equation}\label{growthhstar}
	\frac{C_1^{1-q}}{q}|\eta|^{q}-C_2\le H^{\ast}(x,\eta)\le \frac{C_1^{q-1}}{q}|\eta|^{q}+C_2, \hspace{0.3cm} \forall \, x\in \Om, \; \; \; \eta \in \R^d.
\end{equation} 
Note that since  $H(x,\cdot)$ is strictly convex and differentiable, we have that $H^{\ast}(x,\cdot)$ is also strictly convex and differentiable (see e.g. \cite[Theorem 26.3]{MR0274683}). We denote their gradients  by $\nabla_\xi H(x,\cdot)$ and $\nabla_\eta H^{\ast}(x,\cdot)$, respectively. Moreover, classical results in convex analysis show that for any $x\in\Om$ 
\begin{equation}\label{invertibilidadnablaH} \nabla_\xi H(x,\cdot)^{-1}(\eta)= \nabla_\eta H^{\ast}(x,\eta) \hspace{0.3cm} \forall \; \eta \in \R^d.
\end{equation}
We now prove an elementary result which will be useful later. In the remainder of this article will denote by $C>0$  a generic constant which can change from line to line.
\begin{lemma}\label{basiclemmaonH}  Under \eqref{hyp:1} we have that $H^{\ast}(\cdot,\cdot)$, $\nabla_\xi H(\cdot,\cdot)$ and $\nabla_\eta H^{\ast}(\cdot, \cdot)$ are continuous. Moreover,  if $\beta \in L^{q'}(\Om)^d$ we have that $\nabla_\xi H(\cdot, \beta(\cdot)) \in L^{q}(\Om)^d$. Analogously, if $v\in L^{q}(\Om)^d$ then  $\nabla_\eta H^{\ast}(\cdot, v(\cdot)) \in L^{q'}(\Om)^d$.
\end{lemma}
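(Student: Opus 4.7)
The plan is to handle the statement in three stages: joint continuity of $H^{*}$, joint continuity of the two gradients, and the $L^{p}$ integrability bounds. All three rely on convex analysis combined with the two-sided polynomial bound \eqref{hyp:1} and the induced bound \eqref{growthhstar}.

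\textbf{Continuity of $H^{*}$.} By definition $H^{*}(x,\eta)=\sup_{\xi\in\R^{d}}\{\xi\cdot\eta-H(x,\xi)\}$. For any $\xi$ the map $(x,\eta)\mapsto \xi\cdot\eta-H(x,\xi)$ is continuous, hence $H^{*}$ is lower semicontinuous as a supremum. For the upper semicontinuity I would use \eqref{hyp:1} to localize the supremum: on a bounded set $(x,\eta)\in K\subset\Om\times\R^{d}$, the map $\xi\mapsto\xi\cdot\eta-H(x,\xi)$ blows up to $-\infty$ at rate $-|\xi|^{q'}/(q'C_{1})$ uniformly in $(x,\eta)\in K$, so there is $R=R(K)$ such that the supremum is attained on $\overline{B(0,R)}$. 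A standard subsequence argument then gives $\limsup_{n}H^{*}(x_{n},\eta_{n})\le H^{*}(x,\eta)$ by selecting convergent maximizers.

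\textbf{Continuity of the gradients.} For $\nabla_{\xi}H$ I would argue as follows. Take $(x_{n},\xi_{n})\to(x,\xi)$ and set $\eta_{n}:=\nabla_{\xi}H(x_{n},\xi_{n})$. The subgradient inequality applied at two nearby points, together with \eqref{hyp:1}, shows that $\{\eta_{n}\}$ is bounded (convex functions with polynomial growth are locally Lipschitz with uniform constants). Along any subsequence with $\eta_{n_{k}}\to\eta^{\ast}$ one passes to the limit in
\begin{equation*}
H(x_{n_{k}},\zeta)\ge H(x_{n_{k}},\xi_{n_{k}})+\eta_{n_{k}}\cdot(\zeta-\xi_{n_{k}}),\qquad \forall\,\zeta\in\R^{d},
\end{equation*}
using joint continuity of $H$, to conclude $\eta^{\ast}\in\partial_{\xi}H(x,\xi)=\{\nabla_{\xi}H(x,\xi)\}$ by strict convexity/differentiability. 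Uniqueness of cluster points forces convergence of the whole sequence. The same scheme applies to $\nabla_{\eta}H^{*}$, now invoking the continuity of $H^{*}$ established above together with \eqref{growthhstar}.

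\textbf{The integrability bounds.} The key is a pointwise polynomial estimate
\begin{equation*}
|\nabla_{\xi}H(x,\xi)|\le C\bigl(1+|\xi|^{q'-1}\bigr),\qquad \forall\,(x,\xi)\in\Om\times\R^{d}.
\end{equation*}
To get it I use the subgradient inequality in the direction of the gradient: with $v=\nabla_{\xi}H(x,\xi)/|\nabla_{\xi}H(x,\xi)|$ and $R:=1+|\xi|$,
\begin{equation*}
R\,|\nabla_{\xi}H(x,\xi)|\le H(x,\xi+Rv)-H(x,\xi)\le \tfrac{C_{1}}{q'}(|\xi|+R)^{q'}+2C_{2},
\end{equation*}
where the last step uses the upper bound in \eqref{hyp:1} on $H(x,\xi+Rv)$ and the lower bound on $H(x,\xi)$. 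Since $|\xi|+R\le 2(1+|\xi|)$, dividing by $R$ yields the claim. The symmetric bound $|\nabla_{\eta}H^{*}(x,\eta)|\le C(1+|\eta|^{q-1})$ follows from the same argument applied to $H^{*}$ via \eqref{growthhstar}. The integrability statements are now immediate: raising to the $q$-th (resp.\ $q'$-th) power and using $(q'-1)q=q'$ and $(q-1)q'=q$,
\begin{equation*}
|\nabla_{\xi}H(x,\beta(x))|^{q}\le C(1+|\beta(x)|^{q'}),\qquad |\nabla_{\eta}H^{*}(x,v(x))|^{q'}\le C(1+|v(x)|^{q}),
\end{equation*}
so the conclusions follow by integrating over $\Om$.

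The main obstacle is the continuity of the gradients, since it is genuinely a convex-analysis argument that combines boundedness (from the growth condition) with passing to the limit in the subgradient inequality; the continuity of $H^{*}$ and the pointwise gradient estimate are standard once \eqref{hyp:1} is in hand.
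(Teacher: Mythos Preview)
Your argument is correct. The continuity part follows essentially the same route as the paper: bounding the maximizers via the coercivity in \eqref{hyp:1}, extracting a convergent subsequence, and passing to the limit in the subgradient inequality. The paper is slightly more economical in that it runs this compactness argument once for $\nabla_{\eta}H^{*}$ and obtains continuity of $H^{*}$ as a byproduct, rather than establishing lower and upper semicontinuity of $H^{*}$ separately as you do; but the substance is identical.

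Where you genuinely differ is in the integrability step. The paper does not derive a pointwise bound of the form $|\nabla_{\xi}H(x,\xi)|\le C(1+|\xi|^{q'-1})$. Instead, with $\eta(x)=\nabla_{\xi}H(x,\beta(x))$, it uses the subgradient inequality for $H^{*}(x,\cdot)$ at $\eta(x)$ in the direction of the origin (equivalently, Fenchel--Young) to get $H^{*}(x,\eta(x))\le \beta(x)\cdot\eta(x)+H^{*}(x,0)$, combines this with the lower bound in \eqref{growthhstar} to obtain $|\eta(x)|^{q}\le C(\beta(x)\cdot\eta(x)+1)$, and then absorbs the cross term by Young's inequality. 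Your approach, using the subgradient inequality for $H(x,\cdot)$ at $\xi$ in the direction $\nabla_{\xi}H/|\nabla_{\xi}H|$ with step $R=1+|\xi|$, yields a sharper \emph{pointwise} polynomial estimate on the gradient itself, which is a slightly stronger statement and makes the $L^{q}$ conclusion immediate via $(q'-1)q=q'$. Both arguments are standard; yours has the advantage of giving an explicit gradient bound that could be reused elsewhere, while the paper's is a couple of lines shorter.
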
	
\begin{proof} Let $(x_n,\eta_n)$ and $(x,\eta)$ in $\ov\Om\times\R^d$ be such that $(x_n,\eta_n)\to (x,\eta)$ as $n\to \infty$.  Set $\xi_n:= \nabla_\eta H^{\ast}(x_n,\eta_n)$. By definition of $H^{\ast}$
$$
H^{\ast}(x_n, \eta_n)= \eta_n \cdot \xi_n - H(x_n, \xi_n)\geq \eta_n\cdot \xi -H(x_n, \xi) \hspace{0.3cm} \forall \; \; \xi \in \R^d.
$$
Since $\eta_n$ is  convergent, hence bounded, the first inequality in \eqref{hyp:1} shows that $\xi_n$ is bounded.  Let $\bar{\xi}$ be a limit point of $\xi_n$. Then, using the continuity of $H$ and passing to the limit, up to some subsequence, we get that 
$$
 \eta \cdot \bar{\xi} - H(x, \bar{\xi})\geq \eta \cdot \xi -H(x, \xi) \hspace{0.3cm} \forall \; \; \xi \in \R^d, 
$$
which shows that $\bar{\xi}= \nabla H^{\ast}(x,\eta)$ and any limit point of $H^{\ast}(x_n, \eta_n)$ is equal to $H^{\ast}(x, \eta)$.  The continuity of $\nabla H(\cdot,\cdot)$ follows by the symmetric argument.  Finally, if  $\beta \in L^{q'}(\Om)^d$, setting $x \in \Om \mapsto \eta(x):= \nabla H(x, \beta(x))$, which is a measurable function since $\nabla H(\cdot, \cdot)$ is continuous, we get by convexity that 
$$
H^{\ast}(x,\eta(x)) \leq \beta(x)\cdot \eta(x) +H^{\ast}(x,0),
$$
and so, by \eqref{growthhstar}, we obtain the existence of $C>0$ such that $|\eta(x)|^{q} \leq C  (\beta(x)\cdot\eta(x) +1)$. Using Young's inequality, we get the existence of $C>0$ such that $|\eta(x)|^{q} \leq C (|\beta(x)|^{q'} +1)$ and thus, integrating in $\Omega$, we obtain that $\eta \in L^{q}(\Om)$.  The last assertion follows from an analogous argument.
\end{proof}	
Regarding the dependence of $H$ on the space variable $x$, we will assume that there exists a modulus of continuity which is uniform w.r.t. the second variable, i.e.  $\exists \; \omega:  [0,+\infty) \to  [0,+\infty) $ such that $\omega(0)=0$, $\omega$ is continuous, non-decreasing and 
\begin{equation}\label{hyp:2}
|H(x,\xi)-H(y,\xi)|\le \omega(|x-y|)(|\xi|^{q'} + 1), \hspace{0.3cm} \forall \; x,  y \in \Om, \; \; \xi \in \R^d. 
\end{equation}
Using that $\ov{\Om}$ is a compact set, a natural example of a Hamiltonian $H$ satisfying \eqref{hyp:1} and \eqref{hyp:2} is given by $H(x,\xi):= b(x)|\xi|^{q'} + c(x)$ where $b, c \in C(\ov{\Om})$ and $b>0$. 



Following the analysis in \cite[Lemma 2.1 and Theorem 2.2]{MesSil}, presented in a more particular setting, we shall characterize the subdifferential of $\cB_q$, defined in \eqref{deflqcal}. 
Recall that given a normed space $(X,\|\cdot\|)$ and a l.s.c. convex proper function $g: X \to \ov \R$, the subdifferential $\partial g(x)$ of $f$ at the point $x$, consist in the set of all $x^{*} \in X^{\ast}$ such that 
$$ g(x) + \langle x^{*}, y-x \rangle_{X^{*},X} \leq g(y) \hspace{0.4cm} \forall \; y\in X.$$
For the sake of completeness, in order to identify $\partial \cB_q$, we  first state some simple properties of the function $b_{q}(x, \cdot, \cdot)$. Given $x\in\Om$  consider the set 
\begin{equation}\label{Aqofx}A_{q'}(x):=\{(\a,\b)\in\R\times\R^d\; : \; \a+H(x,-\b)\le 0\}.\end{equation}
Since $H$ is continuous and convex w.r.t. its second variable, we have that $A_{q'}(x)$ is closed and convex  for any $x\in\Om$. Given a subset $D$ of an euclidean space, we denote by $\chi_D$ its characteristic function (in the sense of convex analysis), i.e. $\chi_D(y)=0$ if $y\in D$ and $\chi_D(y)=+\infty$ otherwise. 
\begin{lemma}\label{lemaelemental} For all $x\in \Omega$, the function $b_q(x,\cdot,\cdot)$ is convex, proper and l.s.c. Its Legendre-Fenchel conjugate and its subdifferential  are given by
\begin{equation}\label{conjugatesubdiffellq} b_q^{\ast}(x,\cdot,\cdot)=\chi_{A_{q'}(x)}(\cdot,\cdot), \hspace{0.5cm} \partial_{(m,w)} b_q(x,m,w)= \left\{ \begin{array}{ll} (-H(x,-\beta_x), \beta_x)  & \mbox{{\rm if} } \; m>0, \\[4pt]
							(\alpha,\beta)  \in A_{q'}(x) & \mbox{{\rm if} } \; (m,w)=(0,0), \\[4pt]
							\emptyset & \mbox{{\rm otherwise},}\end{array} \right.\end{equation}
where, if $m>0$,  $\beta_x:=-\nabla H^{\ast}(x,-w/m)$.
\end{lemma}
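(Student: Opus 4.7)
The plan is to recognize $b_q(x,\cdot,\cdot)$ as a perspective function of $H^{\ast}(x,\cdot)$ up to a sign change in the second argument, and to then exploit the standard machinery of perspective functions together with Fenchel's identity. The strategy closely mirrors the one in \cite[Lemma 2.1]{MesSil}, but rewritten for the general $H^{\ast}(x,\cdot)$ instead of the specific $|\cdot|^{q}/q$.

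First I would observe that if we set $\widetilde b_q(x,m,v):= m H^{\ast}(x,v/m)$ for $m>0$, extended to $0$ at $(0,0)$ and to $+\infty$ otherwise, then $\widetilde b_q(x,\cdot,\cdot)$ is the classical perspective of $H^{\ast}(x,\cdot)$, which is proper, convex and lower semicontinuous (e.g.\ by \cite[Theorem~8.3]{MR0274683}), since $H^{\ast}(x,\cdot)$ inherits these properties from the assumptions on $H$ and \eqref{growthhstar}. Because $b_q(x,m,w)=\widetilde b_q(x,m,-w)$ and the change of variables $w\mapsto -w$ is an affine isomorphism, the first claim (convexity, properness, lower semicontinuity of $b_q(x,\cdot,\cdot)$) follows immediately.

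Next I would compute the conjugate directly from the definition. For $m>0$ the change of variable $z=-w/m$ gives
\begin{equation*}
\alpha m+\beta\cdot w - b_q(x,m,w) = m\bigl[\alpha -\beta\cdot z - H^{\ast}(x,z)\bigr],
\end{equation*}
so that $\sup_{w}[\alpha m+\beta\cdot w - b_q(x,m,w)] = m\bigl[\alpha + H(x,-\beta)\bigr]$, using $H^{\ast\ast}=H$. Taking the supremum over $m>0$ (and comparing with the value $0$ at $(m,w)=(0,0)$) yields $0$ if $\alpha+H(x,-\beta)\le 0$ and $+\infty$ otherwise, proving $b_q^{\ast}(x,\cdot,\cdot)=\chi_{A_{q'}(x)}$.

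Finally I would identify $\partial_{(m,w)} b_q(x,m,w)$ by cases, using the Fenchel equality $(\alpha,\beta)\in\partial b_q(x,m,w) \iff b_q(x,m,w)+b_q^{\ast}(x,\alpha,\beta)=\alpha m+\beta\cdot w$. For $m>0$ the function is smooth in $(m,w)$ and the standard chain rule gives $\nabla_w b_q = -\nabla_\eta H^{\ast}(x,-w/m)$ and $\nabla_m b_q = H^{\ast}(x,-w/m)+(w/m)\cdot\nabla_\eta H^{\ast}(x,-w/m)$; setting $\beta_x:=-\nabla_\eta H^{\ast}(x,-w/m)$, the Fenchel identity $H^{\ast}(x,\eta)+H(x,\nabla_\eta H^{\ast}(x,\eta))=\eta\cdot\nabla_\eta H^{\ast}(x,\eta)$ (valid by \eqref{invertibilidadnablaH}) reduces the expression for $\partial_m b_q$ to $-H(x,-\beta_x)$. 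For $(m,w)=(0,0)$, the subdifferential inequality $b_q(x,m',w')\ge \alpha m'+\beta\cdot w'$ is automatic on the domain's complement, and on $m'>0$ the substitution $z=-w'/m'$ reduces it to $H^{\ast}(x,z)\ge \alpha -\beta\cdot z$ for all $z\in\R^d$, i.e.\ $\alpha+H(x,-\beta)\le 0$, which is exactly $(\alpha,\beta)\in A_{q'}(x)$. For all remaining $(m,w)$, $b_q(x,m,w)=+\infty$ so the subdifferential is empty by convention.

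The main technical point I expect is the careful handling of the chain rule identity at $m>0$ to recover the Hamilton form $-H(x,-\beta_x)$; aside from that everything reduces to the standard perspective calculus and the Legendre-Fenchel identity \eqref{invertibilidadnablaH}, so no essentially new difficulty compared with \cite{MesSil} arises.
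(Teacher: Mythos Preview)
Your proposal is correct and follows essentially the same route as the paper. The only cosmetic difference is that the paper packages the convexity/l.s.c.\ and the conjugate in one stroke via the support-function identity $b_q(x,m,w)=\sup_{\beta}\{\beta\cdot w - mH(x,-\beta)\}=\sup_{(\alpha,\beta)\in A_{q'}(x)}\{\alpha m+\beta\cdot w\}$, which immediately yields both $b_q^{\ast}=\chi_{A_{q'}(x)}$ and $\partial b_q(x,0,0)=A_{q'}(x)$, whereas you invoke the perspective-function machinery first and then redo the sup computation; the content is identical.
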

\begin{proof} Using \eqref{deflq}, it is straightforward to check that for all $x\in \ov{\Om}$ we have 
\begin{equation}\label{ellqislsc}b_q(x,m,w)= \sup_{\beta \in \R^{d}} \left\{ \beta \cdot w - m H(x,-\beta) \right\},
\end{equation}
and so $b_q(x,\cdot,\cdot)$ is convex, proper and l.s.c.  Using that $b_q(x,\cdot,\cdot)$ is proper, we have that $\partial_{(m,w)} b_q(x,m,w)=\emptyset$ if $m<0$ or $m=0$ and $w\neq 0$.  On the other hand, from \eqref{ellqislsc},  if $m\geq 0$, for any $x\in\ov\Om$  we have the identity 
\begin{equation}\label{twoequalities} b_q(x,m,w)= \sup\left\{ \alpha m+\beta \cdot w \; : \; (\alpha,\beta) \in A_{q'}(x) \right\},  \end{equation}
which is checked to hold also when $m<0$. Thus, since $A_{q'}(x)$ is closed and convex, $b_q^{*}(x,\cdot,\cdot)= \chi_{A_{q'}(x)}(\cdot,\cdot)$. This expression directly yields that $\partial_{(m,w)} b_q (x,0,0)= A_{q'}(x)$. If $m>0$, then $b_q(x,\cdot,\cdot)$ is differentiable and, by a simple computation, we get the expression of its gradient with respect to $(m,w)$. 
\end{proof}
\begin{remark}\label{Bqwelldefined} Notice that the equality in \eqref{twoequalities} shows that $(x,m,w) \mapsto b_q(x,m,w)$ is lower-semicontinuous and so, by \cite[Example 14.31]{MR1491362}, we have that $b_q$ is a normal integrand. This shows that  $ \Omega\ni x \mapsto b_{q}(x,m(x),w(x))$ is a measurable function if $m$ and $w$ are measurable  {\rm(}see \cite[Proposition 14.28]{MR1491362}{\rm)}. In particular, the  functional $\cB_{q}$ is well defined.
\end{remark}
Let us define\begin{equation}\label{expressionovA}
\cA_{q'}:=\left\{(a,b)\in L^\infty(\Om)\times L^\infty(\Om)^d\; : \; (a(x),b(x))\in A_{q'}(x)\ {\rm{for\ a.e.}}\ x\in\Om\right\}, 
\end{equation}
 and denote by $\sM_+(\ov{\Om})$ and $\sM_{-}(\ov{\Om})$ the subsets of  nonnegative and  nonpositive finite Radon measures of  $\sM(\ov{\Omega})$, respectively.  For a set $D$, we denote by $\one_D$ its indicator function, i.e. $\one_D(y)=1$ if $y\in D$ and $\one_D(y)=0$ otherwise. 
\begin{lemma}\label{lem:subdiff}
The closure of $\cA_{q'}$ in $(W^{1,q}(\Om))^*\times L^{q'}(\Om)^d$ is given by
\begin{equation}\label{closure}
\ov{\cA_{q'}}:=\left\{(\a,\b)\in\sM(\ov\Om)\times L^{q'}(\Om)^d\; : \; \a+H(\cdot,-\b) \in  \sM_{-}(\ov\Om) \right\}, 
\end{equation}
or equivalently, 
\begin{equation}\label{closure1}\ov{\cA_{q'}}:=\left\{(\a,\b)\in\sM(\ov\Om)\times L^{q'}(\Om)^d\; : \; \a^{\rm{ac}}+H(\cdot,-\b) \leq 0, \; \; \mbox{{\rm a.e. in} $\Om$ {\rm and } } \; \alpha^{\rm{s}} \in \sM_{-}(\ov{\Om}) \right\}, 
\end{equation}
where $ \dd \alpha= \alpha^{\rm{ac}} \dd x + \dd  \alpha^{\rm{s}}$ is the Lebesgue decomposition of the measure $\alpha$ w.r.t. the Lebesgue measure restricted to $\Om$. 
\end{lemma}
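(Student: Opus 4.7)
The plan is to decompose the proof into three parts: the equivalence of the descriptions \eqref{closure} and \eqref{closure1}, the closedness of the right-hand side inside $(W^{1,q}(\Omega))^{*}\times L^{q'}(\Omega)^{d}$, and the density of $\cA_{q'}$ in that right-hand side. The pervasive tool is the compact embedding $W^{1,q}(\Omega)\hookrightarrow C(\overline{\Omega})$ (valid because $q>d$): by duality it yields the continuous injection $\sM(\overline{\Omega})\hookrightarrow (W^{1,q}(\Omega))^{*}$, and by a total-boundedness argument on the unit ball of $W^{1,q}(\Omega)$ seen in $C(\overline{\Omega})$, any bounded sequence in $\sM(\overline{\Omega})$ that converges weakly-$*$ converges strongly in $(W^{1,q}(\Omega))^{*}$. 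The equivalence of \eqref{closure} and \eqref{closure1} is then transparent: given $\beta\in L^{q'}(\Omega)^{d}$, the growth \eqref{hyp:1} forces $H(\cdot,-\beta)\in L^{1}(\Omega)$, so the Lebesgue decomposition of $\alpha+H(\cdot,-\beta)\dd x$ is $(\alpha^{\rm ac}+H(\cdot,-\beta))\dd x+\dd\alpha^{\rm s}$, and a finite Radon measure on $\overline{\Omega}$ is nonpositive iff both its absolutely continuous part and its singular part are.

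For the closedness, I would pick a sequence $(\alpha_{n},\beta_{n})$ in the right-hand side with $(\alpha_{n},\beta_{n})\to (\alpha,\beta)$ and show that $\alpha+H(\cdot,-\beta)\dd x\in \sM_{-}(\overline{\Omega})$. The strong convergence $\beta_{n}\to\beta$ in $L^{q'}$ makes $|\beta_{n}|^{q'}$ equi-integrable, and since $|H(\cdot,-\beta_{n})|\le C(|\beta_{n}|^{q'}+1)$ with pointwise a.e.\ convergence along a subsequence (by continuity of $H$), Vitali's theorem yields $H(\cdot,-\beta_{n})\to H(\cdot,-\beta)$ in $L^{1}(\Omega)$. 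Testing the inequality $\langle \alpha_{n}+H(\cdot,-\beta_{n})\dd x,\phi\rangle\le 0$ against an arbitrary nonnegative $\phi\in W^{1,q}(\Omega)\subset C(\overline{\Omega})$ and passing to the limit gives $\langle\alpha+H(\cdot,-\beta)\dd x,\phi\rangle\le 0$; since $C^{\infty}(\overline\Omega)\subset W^{1,q}(\Omega)$ is dense in $C(\overline{\Omega})$ the inequality extends to every nonnegative continuous test function, i.e.\ $\alpha+H(\cdot,-\beta)\dd x\in \sM_{-}(\overline{\Omega})$.

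For the density step, given $(\alpha,\beta)$ in the right-hand side I split $\alpha=\alpha^{\rm ac}\dd x+\alpha^{\rm s}$ and introduce the nonnegative function $g:=-H(\cdot,-\beta)-\alpha^{\rm ac}\in L^{1}(\Omega)$. I would then approximate $\beta$ by truncations $\beta_{n}:=\beta\,\one_{\{|\beta|\le n\}}\in L^{\infty}(\Omega)^{d}$ with $\beta_{n}\to\beta$ in $L^{q'}$, $g$ by $g_{n}:=g\wedge n\in L^{\infty}(\Omega)$ with $g_{n}\to g$ in $L^{1}$, and $\alpha^{\rm s}$ by nonpositive $L^{\infty}(\Omega)$ functions $\sigma_{n}$ with $\sigma_{n}\to\alpha^{\rm s}$ weak-$*$ in $\sM(\overline{\Omega})$, obtained by mollifying $\alpha^{\rm s}$ after a slight inward shift that exploits the uniform interior ball condition granted by the $C^{1,1}$ regularity of $\partial\Omega$. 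Setting $a_{n}:=-H(\cdot,-\beta_{n})-g_{n}+\sigma_{n}\in L^{\infty}(\Omega)$, the identity $a_{n}+H(\cdot,-\beta_{n})=-g_{n}+\sigma_{n}\le 0$ places $(a_{n},\beta_{n})$ in $\cA_{q'}$, and the Vitali argument from the previous step, together with the continuous inclusion $L^{1}(\Omega)\hookrightarrow (W^{1,q}(\Omega))^{*}$ and the weak-$*$-to-strong promotion for $\sigma_{n}$, gives $a_{n}\to -H(\cdot,-\beta)-g+\alpha^{\rm s}=\alpha$ strongly in $(W^{1,q}(\Omega))^{*}$. The main obstacle I foresee is precisely this boundary-aware approximation of $\alpha^{\rm s}$ by nonpositive $L^{\infty}$ functions, since $\alpha^{\rm s}$ can charge $\partial\Omega$; the $C^{1,1}$ regularity is what makes the inward perturbation before mollification legitimate while preserving the sign and not losing mass in the limit.
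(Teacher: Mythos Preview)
Your proof is correct and, in the density step, takes a genuinely different route from the paper's. The paper mollifies both $\alpha^{\rm ac}$ and $\beta$ by convolution, then uses Jensen's inequality together with the modulus-of-continuity hypothesis \eqref{hyp:2} to control the error between $H(x,-\tilde\beta_\varepsilon(x))$ and the mollified $H(\cdot,-\beta)$; the resulting approximants converge only weakly in $(W^{1,q})^*\times L^{q'}$, and the paper closes by invoking the convexity of $\cA_{q'}$ to equate weak and strong closures. Your truncation $\beta_n=\beta\,\one_{\{|\beta|\le n\}}$ combined with the explicit choice $a_n=-H(\cdot,-\beta_n)-g_n+\sigma_n$ enforces $(a_n,\beta_n)\in\cA_{q'}$ by construction and bypasses both Jensen and \eqref{hyp:2}; the $L^1$ convergence of $H(\cdot,-\beta_n)$ follows from dominated convergence using only \eqref{hyp:1}. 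For the singular part, the paper uses normalized ball-averages $\rho_\varepsilon^x=\one_{B_\varepsilon(x)\cap\overline\Omega}/|B_\varepsilon(x)\cap\overline\Omega|$, while you mollify after an inward shift; both exploit the uniform interior ball condition. Your compactness observation promoting weak-$*$ convergence in $\sM(\overline\Omega)$ to strong convergence in $(W^{1,q}(\Omega))^*$ then yields strong convergence directly, without the convexity trick. Net effect: your argument is more economical in hypotheses (no \eqref{hyp:2}) and conceptually cleaner, while the paper's is more explicitly constructive. One minor point to spell out in your closedness step: the limit $\alpha$ is a priori only in $(W^{1,q}(\Omega))^*$, so before extending the inequality to $C(\overline\Omega)$ you should note, as the paper does via \cite[Th\'eor\`eme V]{Schw66}, that the nonpositivity of $\alpha+H(\cdot,-\beta)$ on nonnegative smooth test functions already forces it to be a nonpositive Radon measure, whence $\alpha\in\sM(\overline\Omega)$ since $H(\cdot,-\beta)\in L^1(\Omega)$.
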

\begin{proof}  Let us take $(\a_n,\b_n)\in\cA_{q'}$ converging  to $(\a,\b)$ in $(W^{1,q}(\Om))^*\times L^{q'}(\Om).$ By  definition of $\cA_{q'}$ one has 
$$\int_\Om\a_n(x)\phi(x)\dd x\le-\int_{\Om}H(x,-\b_n(x))\phi(x)\dd x,\;\;\forall\phi\in W^{1,q}(\Om), \; \; \phi \ge 0.$$
Since $\b_n\to\b$ in $L^{q'}(\Om)^d$, up to some subsequence $\b_n(x)\to\b(x)$ a.e. in $\Om$. By \eqref{hyp:1} we can use  Fatou's lemma to obtain that
$$\langle\a,\phi\rangle_{(W^{1,q})*,W^{1,q}}\le -\int_{\Om}H(x,-\b(x))\phi(x)\dd x,\;\;\forall\phi\in W^{1,q}(\Om),\; \; \phi \ge 0.$$
Using  \eqref{hyp:1} again, we obtain that $\a-C_2$ defines a nonpositive distribution, hence by \cite[Th\'eor\`eme V]{Schw66} $\a$ can be identified with an element of $\sM(\ov\Om).$

Conversely let us take $(\a,\b)$ belonging to the right-hand-side (r.h.s.) of \eqref{closure}, or equivalently to the r.h.s. of \eqref{closure1}.
Analogously to \cite{MesSil}, we construct different approximations for $\a^{\rm{ac}}$ and $\b$ on the one hand and for $\a^{\rm{s}}$ on the other hand. For $R>0$ and $x\in \R^d$ we set $B_{R}(x)=\{ y\in \R^d \; : \; |y-x|<R\}$. Consider a mollifier $\eta: \R^d \to \R$ satisfying that $\eta \in C_c^{\infty}(\R^d)$, $\eta\geq 0$, $\ds\int_{\R^d}\eta(x)\,\dd x=1,$ $\spt(\eta)\subseteq B_1(0)$ and $\eta(x)=\eta(-x)$ for all $x\in \R^d$. Now, for $\e>0$ set
$$\Omega_\e:=\{x\in\Omega:\ \dist(x,\partial\Omega)>\e\}, \hspace{0.3cm} \eta_{\e}(x):= \frac{1}{\e^{d}} \eta(x/\e),$$ and for all $x\in \Omega$ and $i=1,..., d$, let us define
$$\tilde\alpha_{\e}(x):= \int_{\Omega} \eta_{\e}(x-y)  \alpha^{{\rm ac}}(y)\dd y \one_{\Om_\e}(x) , \hspace{0.5cm} \tilde\b_{\e}^{i}(x):=\int_{\Omega} \eta_{\e}(x-y)\beta^{i}(y) \dd y \one_{\Om_\e}(x). $$
As $\e\downarrow 0$, we have that  $\tilde\a_\e \to \a^{\rm{ac}}$ in $L^{1}(\Om)$,  $ \tilde\b_\e \to \b$ in $L^{q'}(\Omega)^{d}$ and $(\tilde\a_\e,\tilde\b_\e)\in L^{\infty}(\Om)\times  L^{\infty}(\Om)^d$. Multiplying the   inequality in \eqref{closure1}  by $\eta_\e$, integrating and using  Jensen's inequality yield
\begin{equation}\label{cut}
 \tilde\a_\e(x)+H(x,-\tilde\b_\e(x))\le \tilde\d_\e(x), \hspace{0.3cm} \forall x\in \Om,
\end{equation}
where 
$$\tilde\d_\e(x):= \int_{B_\e(x)}\left[H(x,-\b(y))- H(y,-\b(y))\right] \eta_\e (x-y)\dd y\one_{\Om_\e}(x).$$
Note that  \eqref{hyp:2} implies that 
\begin{equation}\label{inquality_proof}
|\tilde\d_\e(x)| \leq \omega(\e)  \int_{B_\e(x)} \left(1+ |\b(y)|^{q'}\right) \eta_\e (x-y)\dd y\one_{\Om_\e}(x) \leq \frac{\omega(\e)}{\e^{d}}\|\eta\|_{\infty}\|1+|\beta|^{q'} \|_{1} ,
\end{equation}
and so $\tilde{\d}_\e\in L^{\infty}(\Om)$. Using that $ g_\e(\cdot):=\int_{\Om} \left(1+ |\b(y)|^{q'}\right) \eta_\e (\cdot-y)\dd y\one_{\Om_\e}(\cdot)$ converges in $L^1(\Om)$ to $1+ |\b(\cdot)|^{q'}\in L^1(\Om)$, extracting a subsequence, the first inequality in \eqref{inquality_proof} implies that $\tilde\d_\e(\cdot) \to 0$ a.e. in $\Omega$. Since for $\e \in (0,1)$ we have that $|\tilde\d_\e| \leq \omega(1) g_\e$, we get from \cite[Chapter 1.3, Theorem 4]{evans-gariepy} that $\tilde\d_\e\to 0$ in $L^1(\Om)$.
%
%
This implies that $(\hat{\a}_{\e}, \hat{\b}_{\e}):=( \tilde\a_\e-\tilde\d_\e,  \tilde\b_\e)\in \mathcal{A}_{q'}$ and $(\hat{\a}_{\e}, \hat{\b}_{\e})\to (\alpha^{{\rm ac}}, \beta)$ in $L^1(\Om)\times L^{q'}(\Om)^{d}$.

Now, in order to approximate the singular part $\alpha^{{\rm s}}$,  for $x\in\ov\Om$ and $\e>0$ let us define $\ds\rho^x_\e:= \left(\one_{B_\e(x)\cap\ov\Om}\right) / |B_\e(x)\cap\ov\Om|$ and 
$$\hat{\a}^{{\rm s}}_\e(y):=\int_{\ov\Om}\rho_\e^x(y)\,\dd\a^{\rm{s}}(x) \hspace{0.3cm} \forall \; y\in \ov{\Om},$$
which is a non-positive  function. Arguing exactly as in the proof of \cite[Lemma 2.1]{MesSil} we get that the  uniform interior ball assumption on the boundary $\partial \Om$ implies that $\hat{\a}^{{\rm s}}_\e \in L^{\infty}(\Om)$. Using that  $\rho^x_\e \to \d_x$  in $\sM(\overline{\Omega})$, as $\e \downarrow 0$,  it is straightforward to show that $\hat{\a}^{{\rm s}}_\e \to \a^{\rm{s}}$ in  $\sM(\overline{\Omega})$. Therefore, the sequence $(\hat{\a}^{{\rm s}}_\e+ \hat{\a}_{\e}, \hat{\b}_{\e})\to (\alpha, \beta)$ in $\sM(\Om)\times L^{q'}(\Om)^{d}$ and $(\hat{\a}^{{\rm s}}_\e+ \hat{\a}_{\e}, \hat{\b}_{\e}) \in \cA_{q'}$ for all  $\e>0$.  Using that $q>d$, and so $W^{1,q}(\Omega) \hookrightarrow C(\ov{\Om})$ by the Sobolev embedding,  we have that $(\hat{\a}^{{\rm s}}_\e+ \hat{\a}_{\e}, \hat{\b}_{\e})\to (\alpha, \beta)$ weakly in   $(W^{1,q}(\Om))^{\ast}\times L^{q'}(\Om)^{d}$. Since $\cA_{q}$ is convex, its closure w.r.t. the weak and strong topologies coincide. The result follows. 
\end{proof}

For a given representative of $m\in W^{1,q}(\Omega)$  in $C(\ov\Om)$, we denote $\{m=0\}:=\left\{x\in\ov\Om:m(x)=0\right\}$ and $\{m>0\}:=\left\{x\in\ov\Om:m(x)>0\right\}$.  

\begin{theorem}\label{thm:subdiff}  The following assertions hold true:\smallskip\\
{\rm(i)} The functional $\cB_q$,  defined in \eqref{deflqcal}, is convex,  l.s.c. and $\cB_r^*(\a,\b)=\chi_{\ov{\cA_{r'}}}(\a,\b)$ for all $(\a,\b)\in (W^{1,q}(\Om))^*\times L^{q'}(\Om)^d$. \smallskip\\
{\rm(ii)} Let $(m,w)\in W^{1,q}(\Omega)\times L^{q}(\Omega)^d$   and  suppose that $\cB_q(m,w)<\infty$.     Then,  if  $v:= (w/m)\one_{\{m>0\}}\notin L^{q}(\Omega)^d$ we have that $\partial\cB_q(m,w)=\emptyset$. 
Otherwise,  $\cB_q$ is subdifferentiable at $(m,w)$ and \small
\begin{equation}\label{subdifferential}
\partial\mathcal{B}_q(m,w)=\left\{(\a,\b)\in \overline{\cA_{q'}} \; : \; \a\mres \{m>0\}=-H(\cdot,\nabla H^*(\cdot,-v)) \; \; \; {\rm and } \; \;  \;  \b\mres \{m>0\}=-\nabla H^*(\cdot,-v)\right\}.
\end{equation} \normalsize
In particular, the singular part of $\alpha$ in \eqref{subdifferential}, w.r.t. to the Lebesgue measure,  is concentrated in $\{m=0\}$. 
\end{theorem}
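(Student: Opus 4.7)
The argument follows the template of \cite[Theorem 2.2]{MesSil}, with one guiding observation: $b_q(x,\cdot,\cdot)$, and therefore $\cB_q$, is positively $1$-homogeneous. For part (i) this homogeneity together with $\cB_q(0,0)=0$ forces $\cB_q^*$ to be $\{0,+\infty\}$-valued, so the task reduces to identifying its $0$-level set. Part (ii) will be obtained from the extremality relation $\cB_q(m,w)+\cB_q^*(\alpha,\beta)=\langle\alpha,m\rangle+\int\beta\cdot w$, split through the Lebesgue decomposition $\dd\alpha=\alpha^{\rm ac}\dd x+\dd\alpha^{\rm s}$, which is meaningful because $W^{1,q}(\Om)\hookrightarrow C(\overline\Om)$ (valid since $q>d$), allowing $\langle\alpha,m\rangle=\int_\Om\alpha^{\rm ac}m\dd x+\int_{\overline\Om}m\,\dd\alpha^{\rm s}$.

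For part (i), convexity of $\cB_q$ is inherited pointwise from Lemma \ref{lemaelemental}, and strong (hence, by convexity, weak) lower semicontinuity on $W^{1,q}(\Om)\times L^q(\Om)^d$ follows from Fatou's lemma applied to a pointwise a.e.\ convergent subsequence, exploiting the affine representation \eqref{ellqislsc}. To compute $\cB_q^*$, I would first note that for $(a,b)\in\cA_{q'}$ the pointwise inequality \eqref{twoequalities} integrates to $\cB_q^*(a,b)\le 0$; combined with $\cB_q^*(a,b)\ge\cB_q(0,0)=0$ this gives $\cB_q^*\equiv 0$ on $\cA_{q'}$, and weak-$*$ lower semicontinuity of $\cB_q^*$ extends this to $\overline{\cA_{q'}}$. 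For the reverse inclusion, a Rockafellar-type interchange between supremum and integral on the pointwise identity \eqref{twoequalities} yields
\begin{equation*}
\cB_q(m,w)=\sup\left\{\int_\Om(am+b\cdot w)\dd x\;:\;(a,b)\in\cA_{q'}\right\},
\end{equation*}
so that $\cB_q$ is the support functional of $\overline{\cA_{q'}}$ in $(W^{1,q}(\Om))^*\times L^{q'}(\Om)^d$; if $(\alpha,\beta)\notin\overline{\cA_{q'}}$, Hahn--Banach separation in that locally convex dual produces $(m,w)$ with $\langle\alpha,m\rangle+\int\beta\cdot w>\cB_q(m,w)$, and positive homogeneity of $\cB_q$ amplifies this to $\cB_q^*(\alpha,\beta)=+\infty$.

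For part (ii), take $(m,w)$ with $\cB_q(m,w)<\infty$, so that $m\ge 0$ and $w\equiv 0$ on $\{m=0\}$. A pair $(\alpha,\beta)$ is a subgradient iff $(\alpha,\beta)\in\overline{\cA_{q'}}$ and the extremality identity $\cB_q(m,w)=\langle\alpha,m\rangle+\int\beta\cdot w$ holds; expanding via the Lebesgue decomposition this rewrites as
\begin{equation*}
\int_\Om\bigl[b_q(x,m,w)-\alpha^{\rm ac}(x)m(x)-\beta(x)\cdot w(x)\bigr]\dd x\;-\;\int_{\overline\Om}m\,\dd\alpha^{\rm s}=0.
\end{equation*}
Both terms are nonnegative (the integrand by pointwise Fenchel and $(\alpha^{\rm ac}(x),\beta(x))\in A_{q'}(x)$ a.e.; the measure term because $m\ge 0$ and $\alpha^{\rm s}\in\sM_-(\overline\Om)$), hence each vanishes. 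The vanishing integrand forces $(\alpha^{\rm ac}(x),\beta(x))\in\partial_{(m,w)}b_q(x,m(x),w(x))$ a.e., which by Lemma \ref{lemaelemental} gives on $\{m>0\}$ the claimed formulas $\beta=-\nabla H^*(\cdot,-v)$ and $\alpha^{\rm ac}=-H(\cdot,\nabla H^*(\cdot,-v))$, while the vanishing measure term forces $\supp\alpha^{\rm s}\subseteq\{m=0\}$. For the non-existence statement, if $v\notin L^q(\Om)^d$ then any subgradient would produce $\beta\in L^{q'}$ with $\beta=-\nabla H^*(\cdot,-v)$ on $\{m>0\}$; the inversion \eqref{invertibilidadnablaH} then gives $v=\nabla_\xi H(\cdot,-\beta)$ there, and Lemma \ref{basiclemmaonH} forces $v\in L^q$ on $\{m>0\}$ (hence on all of $\Om$), contradicting the assumption. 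The main obstacle I anticipate is the measurable-selection / interchange step underpinning the identification of $\cB_q$ as the support functional of $\overline{\cA_{q'}}$, rather than merely of $\cA_{q'}$; this is precisely where the detailed description of $\overline{\cA_{q'}}$ in Lemma \ref{lem:subdiff} and the Sobolev embedding $W^{1,q}(\Om)\hookrightarrow C(\overline\Om)$ enter decisively.
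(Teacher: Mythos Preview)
Your proposal is correct and follows essentially the same approach as the paper: both identify $\cB_q$ as the support functional of $\overline{\cA_{q'}}$ (the paper handles the interchange step you flag as the main obstacle by truncating each $A_{q'}(x)$ to compact sets $A_{q',k}(x)$, exhibiting measurable pointwise maximizers in $L^\infty\times L^\infty$, and passing to the limit via monotone convergence), and then characterize the subdifferential through attainment of that supremum. Your organization of part (ii) around the Fenchel extremality identity split via the Lebesgue decomposition of $\alpha$ is a minor presentational variant of the paper's direct reduction to the $\beta$-variable in \eqref{severalsup}; the content is the same.
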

\begin{proof} Since the arguments are  similar to  those in the proof of \cite[Theorem 2.2]{MesSil}, we only sketch  the main ideas.  First, truncating the sets $A_{q'}(x)$, defined in \eqref{Aqofx}, by setting for $k\in \mathbb{N}$
$$ A_{q',k}(x):= \left\{ (a,b)\in  A_{q'}(x) \; ; \; a \geq -k, \; \; \mbox{max}_{i=1,\hdots, d} |b_{i}| \leq k \right\}, $$
using  Lemma \ref{lemaelemental} and  the monotone convergence theorem, we have that 
$$
\mathcal{B}_{q}(m,w)= \int_{\Om} \sup_{(a,b) \in A_{q'}(x) } \left\{ a m(x) + b\cdot w(x) \right\} \dd x =\lim_{k\to \infty} \int_{\Om} \sup_{(a,b) \in A_{q',k}(x) } \left\{ a m(x) + b\cdot w(x) \right\} \dd x.$$
Characterizing the point-wise optimizers $(\a(x),\b(x))\in A_{q',k}(x)$  in the last expression, it easy to see  that $\Om\ni x \mapsto (\a(x),\b(x))$ is a measurable function, which  by definition belongs to $L^{\infty}(\Om) \times L^{\infty}(\Om)^d$.  Using this fact and the monotone convergence theorem once again, we find that 
$$\begin{array}{rcl}
\mathcal{B}_{q}(m,w)&=& \ds\sup_{(\a, \b) \in \mathcal{A}_{q'}} \int_{\Om}\left\{ \a(x) m(x) + \b(x)\cdot w(x) \right\} \dd x \\[8pt]
\; &=& \ds\sup_{(\a, \b) \in \ov{\mathcal{A}_{q'}}} \int_{\ov{\Om}}   m(x)  \dd \a(x) + \int_{\Om}\b(x)\cdot w(x)  \dd x.\end{array}
$$
  Since $\ov{\mathcal{A}_{q'}}$ is closed and convex in $(W^{1,q'}(\Om))^{\ast}\times (L^{q'}(\Om))^d$, assertion {\rm (i)} follows.  Using that $\mathcal{B}_{q}(m,w) <+\infty$ implies that $w(x)=0$ a.e. where $m(x)=0$,  the definition of $\ov{\mathcal{A}_{q'}}$ implies  that  
\begin{equation}\label{severalsup}
\begin{array}{l}
\ds\sup_{(\a, \b) \in \ov{\mathcal{A}_{q'}}} \int_{\ov{\Om}}   m(x)  \dd \a(x) + \int_{\Om}\b(x)\cdot w(x)  \dd x\\[8pt]
= \ds\sup_{\b \in L^{q'}(\Om)^d}  \int_{\Om} \left[- H(x,-\b(x))m(x)+   \b(x)\cdot w(x)\right] \dd x, \\[8pt]
= \ds\sup_{\b \in L^{q'}(\Om)^d}  \int_{\Om} \left[- H(x,-\b(x))+   \b(x)\cdot v(x)\right]m(x) \dd x.
\end{array}
\end{equation}
An element $\b   \in L^{q'}(\Om)^d$  maximizes  the above expression  iff for a.e. $x \in \{m>0\}$  we have that $ v(x)=-\nabla H(x, -\b(x))$. Since $\beta \in L^{q'}(\Omega)^d$,  if $v\notin L^{q}(\Om)^d$  Lemma \ref{basiclemmaonH} implies that  the previous relation cannot be satisfied and so $\partial \mathcal{B}_{q}(m,w)=\emptyset$.  On the other hand, if $v\in L^{q}(\Om)^d$ then 
any  $\b \in L^{q'}(\Om)^d$ satisfying  a.e. in $\{m>0\}$ that  $\b(x)= -\nabla H^{\ast}(x,-v(x))$  optimizes the last expression in \eqref{severalsup}. Therefore, using the definition of $\mathcal{A}_{q'}$ we readily get that  if $(\alpha, \b) \in \partial \mathcal{B}_{q}(m,w)$, then 
$$ \beta(x)= -\nabla H^{\ast}(x,-v(x)) \; \; \mbox{and } \; \a\mres \{m>0\}=-H(\nabla H^*(x.-v)) \; \; \; \mbox{a.e. in $\{m>0\}$}.$$
The result follows. 
\end{proof}	
\begin{remark} A generalization of the previous result to the case when $1 < q\leq d$ could be interesting by extending      the techniques in \cite{Bre}. However, since our results in the next section are intrinsically related to the assumption   $q>d$, we have preferred to provide  a direct and self-contained  proof in this case. 
\end{remark}
\section{The variational  problems}\label{variational_problems}
 Let us fix $q>d.$ In order to define the variational problems we  are interested in, we   introduce first the data and our assumptions.   Let  
$$
W_{+}^{1,q}(\Om) := \{ m \in W^{1,q}(\Om) \; ; \; m(x) \geq 0 \; \; \forall \; x\in \Om\}, 
$$ 
and $\cF:~W^{1,q}(\Om)\to~\R $ be such that 
\begin{equation}\label{assumption_F}
\begin{array}{c}
\cF \mbox{  is weakly lower  semicontinuous,   G\^ateaux-differentiable in  $W_{+}^{1,q}(\Om)$ }  \\[4pt]
\mbox{and satisfies that } \; \; \forall \, R>0 \; ; \exists \; C_R \in \R \; \mbox{such that if } m\in W^{1,q}(\Om) \\[4pt]
 \mbox{ and  }  \; \;  0\leq m(x) \leq R \; \; \forall \; x\in \Om,  \; \; \; \mbox{then } \;  \cF(m) \geq C_R. 
\end{array}
\end{equation}  
Given $w \in L^{q}(\Omega)^{d}$  let us  consider the following elliptic PDE, with Neumann boundary conditions, 
\begin{equation}\label{principaleq} 
\left\{
\begin{array}{rcl}
-\Delta m + \diver(w)&=&0,  \hspace{0.4cm} \iin\ \Omega,\\[4pt]
(\nabla m - w)\cdot n &=&0, \hspace{0.4cm}  \oon\ \partial\Omega.
\end{array}
\right.
\end{equation}
We say that $m\in W^{1,q}(\Omega)$ is a weak solution of  \eqref{principaleq} if
\begin{equation}\label{principalecweakform}
\int_\Omega \nabla m(x)\cdot\nabla\varphi(x)\dd x =\int_\Omega w(x)\cdot\nabla\varphi(x)\dd x \hspace{0.5cm} \forall \; \phi \in C^{1}(\overline{\Omega}).
\end{equation}
Let us use the notation $Y:= \{ \ell \in W^{1,q'}(\Om) \; ; \; \int_{\Om} \ell(x) \dd x= 0\}$. Since, $\mbox{div}(w) \in Y^{\ast}$, the results in \cite[Section 7.1]{GiaMar} and \cite[Appendix]{MesSil}  imply the existence of a unique $m\in W^{1,q}(\Om)$ such that $m$ is a weak solution of \eqref{principalecweakform} and $\int_{\Om}m(x) \dd x= 1$. Moreover, there exists $C>0$ such that 
\begin{equation} 
\|\nabla m\|_{q} \leq  C \|w \|_{q},
\end{equation}
and so, by the Poincar\'e-Wirtinger inequality, there exists $C>0$ such that 
\begin{equation}\label{estimacion_w1q}
\|  m\|_{1,q} \leq  C \left(\|w \|_{q} +1\right). 
\end{equation}
Note that \eqref{principalecweakform} can be written as $Am+Bw=0$, with  $A: W^{1,q}(\Omega) \to Y^{\ast}$ and $B: L^{q}(\Omega)^d \to Y^{\ast}$ being linear bounded operators defined as
$$\langle Am, \phi  \rangle_{Y^{\ast}, Y}:= \int_{\Omega} \nabla m \cdot \nabla \phi \dd x, \hspace{0.4cm} \langle  Bw, \phi \rangle_{Y^{\ast},Y}:= -\int_{\Omega}w\cdot \nabla \phi \dd x \hspace{0.5cm} \forall  \; \phi \in Y,$$ 
where $\langle \cdot, \cdot \rangle_{Y^{\ast},Y}$ denotes the   duality product between $Y^{\ast}$ and $Y$. 
Now, let us define $\cJ: W^{1,q}(\Omega) \times L^{q}(\Omega)^{d}\to  \ov{\R}$ and   $G: W^{1,q}(\Omega) \times L^{q}(\Omega)^{d} \to  Y^{\ast} \times \R$ as 
$$\cJ(m,w):= \cB_{q}(m,w) + \cF(m), \hspace{0.4cm}G:=(G_1, G_2):= \left(Am +Bw, \int_{\Om}m(x) \dd x - 1\right).$$
The first variational problem we consider is 
\begin{equation}\label{prob:P1}
\inf_{m \in W^{1,q}(\Om), \; w\in L^{q}(\Om)^d} \; \cJ(m,w) \; \; {\rm{such\ that}} \; \; G(m,w)= 0. \;  \tag{P$_1$}
\end{equation}
In the second variational problem we impose a density constraint: let 
$\kappa \in W^{1,q}(\Om)$ be  such that 
\begin{equation}\label{assumptions_on_kappa}
\underline{\kappa}:= \mbox{min}_{x\in \ov{\Om}} \kappa(x)>0 \hspace{0.3cm} \mbox{and } \int_{\Om} \kappa(x) \dd x >1.
\end{equation}
Given a representative of $\kappa$, still denoted by $\kappa$, we define the set 
$$\cC:=\{ m\in C(\ov{\Om}) \; ; \; m (x) \leq \kappa (x) \; \; \forall \; x \in \ov{\Om}\}.$$
We consider the problem 
\begin{equation}\label{prob:P2}
\inf_{m \in W^{1,q}(\Om), \; w\in L^{q}(\Om)^d} \; \cJ(m,w) \; \; {\rm{such\ that}} \; \; G(m,w)= 0, \; \; m\in \cC.  \tag{P$_2$}
\end{equation}
Note that $q>d$ and the Sobolev embeddings imply that $W^{1,q}(\Omega) \hookrightarrow C(\ov{\Om})$  and so the constraint $m\in \cC$ in \eqref{prob:P2} is well-defined. 
%
%
%
%
%
%
\begin{remark}
 Theorem \ref{thm:subdiff}{\rm(i)} and \eqref{assumption_F} imply that the cost functional $\cJ$ in \eqref{prob:P1} and \eqref{prob:P2} is weakly lower semicontinuous. On the other hand, it is not necessarily convex. 
\end{remark}
\subsection{Existence of solutions of the variational problems}
In this subsection we prove that both problems \eqref{prob:P1} and \eqref{prob:P2} admit at least one solution.  The proof of existence of solutions of problem \eqref{prob:P2} follows the same lines than the proof of  \cite[Theorem 3.1]{MesSil}. The proof of existence of solutions for problem \eqref{prob:P1} introduces an artificial density constraint and  uses the existence of solutions of \eqref{prob:P2}.  Given a Lebesgue measurable set $A\subseteq \R^d$ we denote by $|A|$ its Lebesgue measure. 
\begin{theorem}\label{existence0}  Assume that \eqref{assumption_F} holds. Then,   problem  \eqref{prob:P2} has  at least one solution $(m,w)$. If in addition, $\cF$ is bounded from below in $W_{+}^{1,q}(\Om)$,  by a constant $C_\cF\in\R$, then problem \eqref{prob:P1} also admits at least one solution  $(m,w)$. Moreover, in the latter case,  

\begin{equation}\label{estimates_solution_optimization_problem}
\begin{array}{l}
\|m\|_{\infty} \leq  \max\left\{2c_0 c_1, (2c_0c_1)^{q}q C_1^{q-1}\left(\cF(1/|\Om|)+2C_2- C_{\cF}\right)\right\}, \\[6pt]
\mbox{{\rm and}} \quad \; \; \| w\|_{q}^q\leq   q C_1^{q-1}\left(\cF(1/|\Om|)+ 2C_2- C_{\cF} \right)\|m\|_{\infty}^{q-1},
\end{array}
\end{equation}
where $C_1$ and $C_2$ satisfy \eqref{hyp:1} and $c_0$ and $c_1$ depend only on the geometry of $\Om$.
\end{theorem}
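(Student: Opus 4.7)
The plan is to establish \eqref{prob:P2} by the direct method in the calculus of variations, and then to deduce \eqref{prob:P1} from \eqref{prob:P2} by imposing an artificial uniform density cap $\kappa\equiv K$ and showing that, for $K$ large enough (depending only on the data), this cap is never saturated.

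For \eqref{prob:P2}, the pair $(m_{0},w_{0}):=\bigl(\kappa/\!\int_\Om\kappa\dd x,\,\nabla\kappa/\!\int_\Om\kappa\dd x\bigr)$ is admissible (the condition $\int_\Om\kappa\dd x>1$ from \eqref{assumptions_on_kappa} forces $m_{0}\le\kappa$) and has finite cost thanks to $\underline\kappa>0$ and $\kappa\in W^{1,q}(\Om)$, so the infimum is finite. Along a minimizing sequence $(m_{n},w_{n})$ the density constraint forces $\|m_{n}\|_\infty\le\|\kappa\|_\infty$, whence $\cF(m_{n})\ge C_{\|\kappa\|_\infty}$ by \eqref{assumption_F}. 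Feeding the lower bound of $H^{*}$ from \eqref{growthhstar} into the definition of $\cB_{q}$, and noting that $w_{n}=0$ a.e. on $\{m_{n}=0\}$ (else $\cB_{q}=+\infty$), yields
\[
\cB_{q}(m_{n},w_{n})\ \ge\ \frac{C_{1}^{1-q}}{q\|\kappa\|_\infty^{q-1}}\|w_{n}\|_{q}^{q}-C_{2},
\]
which combined with the minimizing-sequence upper bound controls $\|w_{n}\|_{q}$, and then \eqref{estimacion_w1q} controls $\|m_{n}\|_{1,q}$. Passing to a subsequence, $m_{n}\weakly m$ in $W^{1,q}(\Om)$, $w_{n}\weakly w$ in $L^{q}(\Om)^{d}$, and since $q>d$ the Rellich--Kondrachov embedding gives $m_{n}\to m$ uniformly on $\ov\Om$. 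The linear constraint $G(m,w)=0$ passes to the weak limit and $m\le\kappa$ passes from uniform convergence; Theorem \ref{thm:subdiff}(i) (convex plus strong l.s.c., hence weakly l.s.c.) together with the weak lower semicontinuity of $\cF$ make $\cJ$ weakly l.s.c., so $(m,w)$ is a minimizer.

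For \eqref{prob:P1}, taking $\eta=0$ in \eqref{growthhstar} gives $H^{*}(x,0)\le C_{2}$, hence the competitor $(1/|\Om|,0)$ satisfies $\cJ(1/|\Om|,0)\le C_{2}+\cF(1/|\Om|)$. Fix a constant $K>1/|\Om|$ (to be chosen) and apply the previous step to \eqref{prob:P2} with $\kappa\equiv K$, producing a minimizer $(m_{K},w_{K})$. Using $(1/|\Om|,0)$ as a competitor there together with $\cF\ge C_{\cF}$ yields $\cB_{q}(m_{K},w_{K})\le 2C_{2}+\cF(1/|\Om|)-C_{\cF}$, and repeating the previous lower bound on $\cB_{q}$ with $\|m_{K}\|_\infty$ in place of $\|\kappa\|_\infty$ gives exactly the second inequality in \eqref{estimates_solution_optimization_problem}. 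Combining it with the Sobolev estimate $\|m_{K}\|_\infty\le c_{0}\|m_{K}\|_{1,q}$ and \eqref{estimacion_w1q} produces the self-referential bound
\[
\|m_{K}\|_\infty\ \le\ c_{0}c_{1}\bigl(A\,\|m_{K}\|_\infty^{(q-1)/q}+1\bigr),\qquad A:=\bigl[qC_{1}^{q-1}(\cF(1/|\Om|)+2C_{2}-C_{\cF})\bigr]^{1/q};
\]
a dichotomy on whether $\|m_{K}\|_\infty\le 2c_{0}c_{1}$ or not closes the loop and produces the first inequality in \eqref{estimates_solution_optimization_problem}, with a bound $M_{0}$ independent of $K$. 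Choosing $K>M_{0}$, any admissible $(m,w)$ for \eqref{prob:P1} either satisfies $\cJ(m,w)>C_{2}+\cF(1/|\Om|)\ge\cJ(m_{K},w_{K})$, or else the same a priori argument applied to $(m,w)$ gives $\|m\|_\infty<K$, making it admissible for \eqref{prob:P2} with $\kappa\equiv K$ and hence $\cJ(m,w)\ge\cJ(m_{K},w_{K})$ by optimality. This shows that $(m_{K},w_{K})$ solves \eqref{prob:P1}.

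I expect the delicate point to be precisely the closure of the self-referential inequality for $\|m_{K}\|_\infty$: the coercivity of $\cB_{q}$ only controls $\|w\|_{q}^{q}$ in terms of $\|m\|_\infty^{q-1}$, while $m$ is in turn controlled by $w$ through \eqref{estimacion_w1q}, and closure is possible only because the exponent $(q-1)/q$ is strictly less than $1$. Together with the continuous embedding $W^{1,q}(\Om)\hookrightarrow C(\ov\Om)$ provided by $q>d$ (which both makes the density constraint well-posed and allows uniform convergence along minimizing sequences), this is what makes the reduction from \eqref{prob:P2} to \eqref{prob:P1} run with an explicit, data-dependent $L^\infty$ bound.
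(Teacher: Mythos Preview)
Your proof is correct and follows essentially the same strategy as the paper: the direct method for \eqref{prob:P2} exploiting the density cap for coercivity, followed by an artificial cap $\kappa\equiv K$ and the self-referential $L^\infty$ bound (closed via the exponent $(q-1)/q<1$) to handle \eqref{prob:P1}. Your explicit competitor $w_0=\nabla\kappa/\|\kappa\|_1$ (the paper instead invokes surjectivity of $B$ to produce $\hat w$) and your direct application of the a priori bound to arbitrary competitors for \eqref{prob:P1} (the paper routes through an auxiliary \eqref{prob:P2} with a larger cap $\kappa'$) are minor streamlinings of the same argument.
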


\begin{proof} We first prove the assertion for problem \eqref{prob:P2}, where the density constraint allows to obtain directly some bounds on any minimizing sequence.  Define 
\begin{equation}
\hat{\kappa}:= \frac{\kappa}{\|\kappa\|_{1}} \hspace{0.4cm} \mbox{and so } \; \;  0 <\hat{\kappa}(x)<\kappa(x)  \; \; \;  \forall \; x\in \ov{\Om} \; \;  \; \mbox{and } \; \int_{\Om} \hat{\kappa}(x)\dd x=1. 
\end{equation}
By the surjectivity result in  \cite[Lemma A.1]{MesSil}, there exists $\hat{w} \in L^{q}(\Om)^d$ such that $A\hat{\kappa} + B\hat{w}=0$.  By \eqref{growthhstar}, we have that 
$$
\cB_q(\hat{\kappa}, \hat{w}) \leq  \frac{C_1^{q-1}}{q} \int_{\Om} \frac{|\hat{w}(x)|^{q}}{\underline{\kappa}^{q-1}} \dd x + C_2 +\cF(\hat{\kappa})< +\infty,
$$
which implies that the infimum in \eqref{prob:P2} is not $+\infty$. Now, let $(m_n, w_n)$ be a minimizing sequence and set $\ov{\kappa}:= \max_{x\in \ov{\Om}} \kappa(x)$. The previous discussion implies the existence of $C>0$ such that $\cJ(m_n, w_n) \leq C$ for all $n \in \mathbb{N}$. In particular, by \eqref{deflq},    $w_n=0$ a.e. on the set $\{m_n=0\}$. Since $(m_n, w_n)$ is feasible, we get that $(1/m_n^{q-1})\one_{\{m_n>0\}}\geq \ov{\kappa}^{1-q}$ and so  \eqref{growthhstar} and \eqref{assumption_F}, with $R=\ov{\kappa}$, imply that
\begin{equation}\label{estimation_with_the_cost}
\int_{\Omega} |w_{n}|^{q} \dd x= \int_{\{m_n>0\}} |w_{n}|^{q} \dd x \leq q(C_1 \ov{\kappa})^{q-1} \left(C+C_2- C_{\ov{\kappa}} \right).
\end{equation}
Therefore, the sequence $w_n$ is bounded in $L^{q}(\Om)^{d}$ and so, by  \eqref{estimacion_w1q}, the sequence $m_n$ is bounded in $W^{1,q}(\Omega)$. Therefore, extracting a subsequence, we obtain the existence of $(m,w)$ such that $m_n$ converges weakly to $m$ in $W^{1,q}(\Om)$ and $w_n$ converges weakly to $w$ in $L^{q}(\Om)^{d}$. By passing to the weak limit, we get that $G(m,w)=0$ and $m\in \cC$.   Finally,  using that $\cJ$ is weakly lower semicontinuous we get that $(m,w)$ solves \eqref{prob:P2}.

Now, let us prove existence for \eqref{prob:P1} under the additional assumption on  the boundedness  from below of $\cF$ in $W^{1,q}_{+}(\Om)$.  Let $\gamma> 1/|\Om|$. Since $(\hat{m}, \hat{w}):= (1/|\Omega|, 0)$ is feasible for problem \eqref{prob:P2}, with $\kappa(x) \equiv \gamma$,  we have that \eqref{prob:P2} admits at least one solution. We will  show that any such solution $(m_\g, w_\g)$ satisfies that $\|m_\g\|_{\infty}\leq \bar{\kappa}$ for some constant $\bar{\kappa}>0$ which is independent of $\gamma$. This will prove the result since any solution $(m_{\bar{\kappa}},w_{\bar{\kappa}})$ of $(P_{2})$ with $\kappa(x)\equiv \bar{\kappa}$ solves \eqref{prob:P1}. Indeed, if there is a feasible $(m,w)$ for problem \eqref{prob:P1} such that $\cJ(m,w)<\cJ (m_{\bar{\kappa}},w_{\bar{\kappa}})$ then since there exists $\kappa'>0$ such that $\|m\|_{\infty} \leq \kappa'$ (because $m\in W^{1,q}(\Om)$) we have that $\cJ(m_{\kappa'},w_{\kappa'})\leq \cJ(m,w)$, where $(m_{\kappa'},w_{\kappa'})$ is a solution of $(P_{2})$ with $\kappa\equiv \kappa'$,  and $m_{\kappa'}\leq \bar{\kappa}$ which contradicts the optimality of $(m_{\bar{\kappa}},w_{\bar{\kappa}})$ in \eqref{prob:P2} with $\kappa=\ov\kappa$.

Let us denote by $c_{0}>0$ a constant such that $\|m\|_{\infty} \leq c_{0} \|m\|_{1,q}$ for all $m\in W^{1,q}(\Om)$  and by $c_1>0$ the constant in \eqref{estimacion_w1q}. Thus, any solution of $G(m,w)=0$ satisfies that 
$$\|m\|_{\infty}\leq c_0 c_1(\|w\|_{q}+1)\leq 2c_0c_1 \max\{\|w\|_{q},1\}.$$

Now, let us fix a solution $(m_\g, w_\g)$  of \eqref{prob:P2} with $\kappa \equiv \gamma$.  If $\|w_\g\|_{q} \leq 1$, then $\|m_\g\|_{\infty} \leq 2c_0c_1$, so let us assume that $\|w_\g\|_{q} > 1$ and so $\|m_\g\|_{\infty} \leq 2c_0c_1  \|w_\g\|_{q}$. Since $w_{\gamma}$ vanishes a.e. in $\{m_{\g}=0\}$, arguing as in 
\eqref{estimation_with_the_cost} we get that 
\begin{equation}\label{estimation_with_the_cost_1}
\|w_{\g}\|_{q}^q  \leq q C_1^{q-1}\left(\cB_{q}(1/|\Om|,0)+\cF(1/|\Om|)+C_2- C_{\cF} \right)(2c_0c_1  \|w_\g\|_{q})^{q-1}.
\end{equation}
Since \eqref{growthhstar} implies that $\cB_{q}(1/|\Om|,0)\leq C_2$, we find that
$$
\|w_{\g}\|_{q}\leq q C_1^{q-1}\left(\cF(1/|\Om|)+ 2C_2- C_{\cF} \right)(2c_0c_1)^{q-1}
$$
and so 
$$
\|m_{\gamma}\|_{\infty} \leq (2c_0c_1)^{q}q C_1^{q-1}\left(\cF(1/|\Om|)+ 2C_2- C_{\cF} \right).
$$
The result follows. 
\end{proof}
\subsection{Existence of  solutions of the Mean Field Game systems}\label{subsec:q} 
We recall that for a non-empty closed and convex set $K$, the normal cone $\cN_{K}(x)$ to $K$ at $x$ is defined as $\cN_{K}(x):= \partial \chi_{K}(x)$. 
We have the following existence result.
\begin{theorem}\label{existence_MFG_without_density_constraints}   Assume that \eqref{assumption_F} holds and that $\cF$ is bounded from below in $W_{+}^{1,q}(\Om)$. Then, there exists  $(m,u,\lambda) \in W^{1,q}(\Om) \times Y \times \R$ such that 
\begin{equation}\label{MFGq_1_abstract}
\left\{
\begin{array}{rll} 
A^{\ast}u+ H(\cdot,\nabla u) +\lambda &= D\cF(m), & \; \\[6pt]
-\Delta  m - \diver\left(m \nabla_\xi H(\cdot,\nabla u)\right)  &= 0,  & \iin\ \Omega, \\[6pt]
(\nabla m+m \nabla_\xi H(\cdot,\nabla u))\cdot n &= 0,   & \oon\ \partial\Omega,\\[6pt]
\ds\int_\Omega m\dd x=1,&     m(x)> 0, &  \iin\ \ov{\Omega},
\end{array}
\right. 
\end{equation}
where the second equation,  with its boundary condition, is satisfied in the weak sense {\rm(}see \eqref{principalecweakform}{\rm)}. 
\end{theorem}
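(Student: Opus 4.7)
The strategy is to realize $(m,u,\lambda)$ as a minimizer of \eqref{prob:P1} together with the Lagrange multipliers associated with its affine constraint. First I would invoke Theorem \ref{existence0} (its second assertion, pertaining to \eqref{prob:P1}) to obtain a minimizer $(m,w)\in W^{1,q}(\Om)\times L^{q}(\Om)^{d}$ with $m\geq 0$, $\int_{\Om}m\,\dd x=1$, $\|m\|_{\infty}<+\infty$ by \eqref{estimates_solution_optimization_problem}, and $\cJ(m,w)<+\infty$ (so in particular $w=0$ a.e.\ on $\{m=0\}$ by \eqref{deflq}).

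Next I would derive first-order optimality conditions. The affine map $G$ has linear part $(m,w)\mapsto(Am+Bw,\int_{\Om}m\,\dd x)$, which is surjective onto $Y^{\ast}\times\R$: the first component is surjective by the solvability of \eqref{principaleq} recalled just before \eqref{estimacion_w1q} (see also \cite[Lemma A.1]{MesSil}), and the mean can be tuned independently by adjusting $m$ by a constant. Since $\cJ=\cB_{q}+\cF$ is the sum of the convex l.s.c.\ functional $\cB_{q}$ (Theorem \ref{thm:subdiff}(i)) and the G\^ateaux-differentiable functional $\cF$, a standard Lagrange multiplier theorem yields $(u,\lambda)\in Y\times\R$ such that
\begin{equation*}
\bigl(A^{\ast}u+\lambda\mathbf{1}-D\cF(m),\,-\nabla u\bigr)\in\partial\cB_{q}(m,w),
\end{equation*}
where $A^{\ast}u$ acts on $\varphi\in W^{1,q}(\Om)$ via $\int_{\Om}\nabla u\cdot\nabla\varphi\,\dd x$ and $B^{\ast}u=-\nabla u$.

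Then I would translate this inclusion into \eqref{MFGq_1_abstract} via Theorem \ref{thm:subdiff}(ii). On $\{m>0\}$ the second component forces $\nabla u=\nabla_{\eta}H^{\ast}(x,-w/m)$, and \eqref{invertibilidadnablaH} gives $w=-m\nabla_{\xi}H(\cdot,\nabla u)$; substituting this into $G_{1}(m,w)=0$ produces the weak continuity equation with the prescribed Neumann boundary condition (cf.\ \eqref{principalecweakform}). The first component, once $\partial_{m}b_{q}$ is computed via the Legendre identity, gives $A^{\ast}u+H(\cdot,\nabla u)+\lambda=D\cF(m)$ a.e.\ on $\{m>0\}$.

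The main obstacle is to rule out a singular component of the $\alpha$-part of the multiplier on $\{m=0\}$ and to conclude the HJB equation on all of $\Om$. This is handled by proving $m(x)>0$ for every $x\in\overline\Om$ using the Harnack inequality of \cite{MR0369884}, carried out in the appendix: by Lemma \ref{basiclemmaonH} the drift $\nabla_{\xi}H(\cdot,\nabla u)$ belongs to $L^{q}(\Om)^{d}$ with $q>d$, and the $C^{1,1}$ regularity of $\partial\Om$ permits the reflection needed to apply Harnack up to $\overline\Om$. Once $\{m=0\}=\emptyset$, Theorem \ref{thm:subdiff}(ii) shows that the singular part of the first component of the subgradient (which is supported on $\{m=0\}$) vanishes, and the HJB equation holds on all of $\Om$. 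No circular reasoning is involved: the second component of the Lagrangian inclusion pins down $w$ in terms of $\nabla u$ everywhere in $\Om$ (including $w=0$ on $\{m=0\}$) before Harnack is invoked.
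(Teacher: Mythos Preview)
Your proposal is correct and follows essentially the same route as the paper: obtain a minimizer of \eqref{prob:P1} via Theorem~\ref{existence0}, derive the inclusion $(-D\cF(m),0)\in\partial\cB_q(m,w)+\cN_{G^{-1}(0)}(m,w)$, read off $(u,\lambda)$ from the normal cone via surjectivity of $G$, identify $w=-m\nabla_\xi H(\cdot,\nabla u)$ through Theorem~\ref{thm:subdiff}(ii), and apply the Harnack inequality (Lemma~\ref{harnack_neumann}) to get $m>0$ on $\overline\Om$ and hence the HJB equation globally. The only place where the paper is more explicit is the qualification step: it justifies the sum rule $\partial(\cB_q+\chi_{G^{-1}(0)})=\partial\cB_q+\cN_{G^{-1}(0)}$ by noting that $\cB_q$ is finite and continuous at the feasible point $(1/|\Om|,0)$, a point you should make explicit rather than absorb into a ``standard Lagrange multiplier theorem''.
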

\begin{proof} Theorem \ref{existence0} yields the existence of a solution  $(m,w) \in W^{1,q}(\Om) \times L^{q}(\Om)^d$ of problem \eqref{prob:P1}.   By definition, 
$$ \hat{\cB}_{q}(m,w)+ \cF(m) \leq  \hat{\cB}_{q}(\tilde m,\tilde w)+ \cF(\tilde m) \hspace{0.3cm} \forall \; (\tilde m,\tilde w)\in W^{1,q}(\Omega) \times L^{q}(\Omega)^{d}, $$
where $\hat{\cB}_{q}(m,w):=  \cB_{q}(m,w)+  \ds \chi_{G^{-1}(0)}(m,w)$. Since $ \hat{\cB}_{q}(m,w)$ is finite, the G\^ateaux differentiability of $\cF$ and the convexity of $\hat{\cB}_{q}$ imply that 
$$ - \langle D\cF(m), \tilde m \rangle \leq \liminf_{ \tau \downarrow 0}  \frac{\hat{\cB}_{q}(m+\tau \tilde m,w+\tau \tilde w)-\hat{\cB}_{q}(m,w)}{\tau}=\inf_{\tau> 0}  \frac{\hat{\cB}_{q}(m+\tau\tilde m,w+\tau\tilde w)-\hat{\cB}_{q}(m,w)}{\tau},$$
where we have denoted by $\langle \cdot, \cdot \rangle$ the duality product between $(W^{1,q}(\Om))^{\ast}$ and $W^{1,q}(\Om)$. Taking  $\tau=1$ in the last term of the previous inequality, we get that 
$$ \hat{\cB}_{q}(m,w)-  \langle D\cF(m), \tilde m  \rangle  \leq \hat{\cB}_{q}(m+\tilde m,w+\tilde w)  \hspace{0.3cm} \forall \; (\tilde m,\tilde w)\in W^{1,q}(\Omega) \times L^{q}(\Omega)^{d},$$
from which $(-D\cF(m),0) \in \partial \hat{\cB}_{q}(m,w)$. Set  $(\hat{m},\hat{w}):=(1/|\Om|, 0)$. Since $\chi_{G^{-1}(0)}(\hat{m},\hat{w})=0$ and $\cB_{q}$ is finite and  continuous at $(\hat{m},\hat{w})$ (since $q>d$), we have (see e.g. \cite[Theorem 9.5.4(b)]{AttButMic}) 
\begin{equation}\label{subdifferentialseparation}
\begin{array}{rcl} (-D\cF(m),0) \in \partial \hat{\cB}_{q}(m,w)&=& \partial  \cB_{q}(m,w) + \partial  \chi_{G^{-1}(0)}(m,w)\\[6pt]
\; &=&\partial  \cB_{q}(m,w) + \cN_{G^{-1}(0)}(m,w).  \end{array}\end{equation}
In particular $\partial \cB_{q}(m,w)\neq \emptyset$ and so, by Theorem \ref{thm:subdiff}{\rm(ii)},   $v = (w/m)\one_{\{m>0\}}\in L^{q}(\Om)^{d}$. Lemma \ref{harnack_neumann}, in the appendix, implies that $m >0$ in $\ov{\Om}$, hence  Theorem \ref{thm:subdiff}{\rm(ii)} implies that 
$$\partial \cB_{q}(m,w)= \left\{\left(-H(\cdot,\nabla H^*(\cdot,-v)), -\nabla H^*(\cdot,-v)\right)\right\}.$$
On the other hand, by \cite[Lemma A.1]{MesSil} we have that $G$ is surjective and so      (see e.g. \cite{BonSha})
$$\begin{array}{rcl}\cN_{G^{-1}(0)}(m,w)&=&\{ DG(m,w)^{*}(\hat{u}, \hat{\lambda}) \; ; \; (\hat{u},\lambda) \in Y \times \R\},\\[6pt]
\; &=&\{(A^{*}\hat{u}+ \hat{\lambda}, B^{\ast}\hat{u})\; ; \; (\hat{u}, \hat{\lambda} ) \in Y \times \R\}.\end{array}$$
%
Therefore,  since $B^{\ast} \hat{u}= -\nabla \hat{u}$, we get the existence of  $(\hat{u}, \hat{\lambda})\in   Y\times \R $ such that
\begin{equation}
  -A^{\ast} \hat{u}+H(\cdot,\nabla H^*(\cdot,-v))-  \hat{\lambda}   =  D\cF(m), \hspace{0.5cm} \nabla H^*(\cdot,-v)  =   -\nabla \hat{u}.
\end{equation}
Thus, defining $u=-\hat{u}$ and  $\lambda=-\hat{\lambda}$  we get the first equation in \eqref{MFGq_1_abstract}. On the other hand, since $w=mv$ and $v= -\nabla_\xi H(\cdot,\nabla u)$, the remaining equations in  \eqref{MFGq_1_abstract} also hold true.  
\end{proof}


The proof of the following result, concerning problem \eqref{prob:P2}, is analogous to the previous one (see also Theorem 4.1 and Corollary 4.2 in \cite{MesSil}), hence we omit it. Notice, however, that the extra assumption on the global lower bound for $\cF$ is not needed, since existence also holds true when we only assume \eqref{assumption_F} (see Theorem \ref{existence0}). 

\begin{theorem}\label{existence_MFG_with_density_constraints}  Assume that \eqref{assumption_F} holds. Then, there exists $(m,u,p,\lambda) \in W^{1,q}(\Om) \times Y \times \sM(\ov\Om)\times \R$ such that 
\begin{equation}\label{MFGq_2_abstract}
\left\{
\begin{array}{rll} 
A^{\ast}u+ H(\cdot,\nabla u)-p+ \lambda &= D\cF(m), &  \;  \\[6pt]
-\Delta  m - \diver\left(m \nabla_\xi H(\cdot,\nabla u)\right)  &= 0,  & \iin\ \Omega, \\[6pt]
(\nabla m+m \nabla_\xi H(\cdot,\nabla u))\cdot n &= 0,   & \oon\ \partial\Omega,\\[6pt]
\ds\int_\Omega m\dd x=1,&   0 < m(x)\le \k(x), \hspace{0.5cm}  &  \iin\ \ov{\Omega},\\[8pt]
 \mbox{{\rm spt}}(p) \subseteq \{m=  \kappa\},  &  p \geq 0, &  \iin\ \ov{\Omega}
\end{array}
\right. 
\end{equation}
where the second equation, with its boundary condition is satisfied in the weak sense {\rm(}see \eqref{principalecweakform}{\rm)}. 
\end{theorem}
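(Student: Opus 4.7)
The plan is to parallel the proof of Theorem \ref{existence_MFG_without_density_constraints} with one new ingredient: the density constraint $m\in\cC$ enters as an additional indicator in the functional whose subdifferential is to be split. I would start from a minimizer $(m,w)\in W^{1,q}(\Om)\times L^{q}(\Om)^{d}$ of \eqref{prob:P2} (provided by Theorem \ref{existence0}, which does not require a global lower bound on $\cF$), and introduce $\hat{\cB}_{q}(\tilde m,\tilde w):=\cB_{q}(\tilde m,\tilde w)+\chi_{G^{-1}(0)}(\tilde m,\tilde w)+\chi_{\tilde\cC}(\tilde m,\tilde w)$ with $\tilde\cC:=\{(\tilde m,\tilde w):\tilde m\in\cC\}$. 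Exactly as in the proof of Theorem \ref{existence_MFG_without_density_constraints}, the convexity of $\hat{\cB}_{q}$ combined with the G\^ateaux differentiability of $\cF$ produces the inclusion $(-D\cF(m),0)\in\partial\hat{\cB}_{q}(m,w)$.

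The core step is to split this subdifferential via the sum rule \cite[Theorem 9.5.4(b)]{AttButMic}, which requires a common continuity/interior point for the three convex ingredients. I would use $(\hat\kappa,\hat w)$ where $\hat\kappa:=\kappa/\|\kappa\|_{1}$ and $\hat w\in L^{q}(\Om)^{d}$ is furnished by \cite[Lemma A.1]{MesSil} so that $A\hat\kappa+B\hat w=0$: by \eqref{assumptions_on_kappa} we have $\hat\kappa(x)<\kappa(x)$ strictly on $\ov\Om$, hence (using $q>d$ and $W^{1,q}(\Om)\hookrightarrow C(\ov\Om)$) the pair $(\hat\kappa,\hat w)$ lies in the interior of $\tilde\cC$; simultaneously $\hat\kappa\ge\underline\kappa/\|\kappa\|_{1}>0$ makes $\cB_{q}$ continuous at that point. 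The sum rule then yields
$$
(-D\cF(m),0)\in\partial\cB_{q}(m,w)+\cN_{G^{-1}(0)}(m,w)+\cN_{\tilde\cC}(m,w).
$$

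To conclude, I would identify the three summands. Lemma \ref{harnack_neumann} gives $m>0$ on $\ov\Om$, so Theorem \ref{thm:subdiff}{\rm(ii)} pins down $\partial\cB_{q}(m,w)=\{(-H(\cdot,\nabla H^{\ast}(\cdot,-v)),-\nabla H^{\ast}(\cdot,-v))\}$ with $v=w/m$; the surjectivity in \cite[Lemma A.1]{MesSil} yields $\cN_{G^{-1}(0)}(m,w)=\{(A^{\ast}\hat u+\hat\lambda,B^{\ast}\hat u):(\hat u,\hat\lambda)\in Y\times\R\}$; and a short test-function argument, applied to the defining inequality of $\partial\chi_{\tilde\cC}(m,w)$ using free perturbations of $w$ together with one-sided variations of $m$ on $\{m<\kappa\}$ and $\{m=\kappa\}$, produces $\cN_{\tilde\cC}(m,w)=\{(p,0):p\in\sM_{+}(\ov\Om),\ \spt(p)\subseteq\{m=\kappa\}\}$. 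Writing the resulting identity component by component, relabelling $u:=-\hat u$, $\lambda:=-\hat\lambda$, and using $w=-m\nabla_{\xi}H(\cdot,\nabla u)$ delivers \eqref{MFGq_2_abstract} in the weak sense of \eqref{principalecweakform}. The main obstacle I expect is the simultaneous splitting of three subdifferentials; the qualification is available only thanks to the strict inequality $\int_{\Om}\kappa\,\dd x>1$ in \eqref{assumptions_on_kappa}, which is precisely what supplies a common interior point of continuity for the three functionals.
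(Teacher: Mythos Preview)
Your proposal is correct and follows precisely the route the paper indicates (the proof is omitted there as ``analogous'' to that of Theorem \ref{existence_MFG_without_density_constraints}, with the density constraint contributing the extra normal-cone term $\cN_{\tilde\cC}$, cf.\ \cite[Theorem 4.1 and Corollary 4.2]{MesSil}). One small point of logical order: before invoking Lemma \ref{harnack_neumann} you must first observe that the subdifferential splitting already yields $\partial\cB_{q}(m,w)\neq\emptyset$, so that Theorem \ref{thm:subdiff}{\rm(ii)} gives $v=(w/m)\one_{\{m>0\}}\in L^{q}(\Om)^{d}$---only then is the hypothesis of the Harnack lemma available to conclude $m>0$ on $\ov\Om$.
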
 
\begin{remark}
In the above theorem, the variable $p$ plays the role of the Lagrange multiplier associated to the density constraint $m\le\kappa$.
\end{remark}
We discuss now the uniqueness of solutions of systems \eqref{MFGq_1_abstract} and \eqref{MFGq_2_abstract} under a convexity assumption on $\cF$.
\begin{proposition}\label{uniqueness} If $\cF$ is strictly convex in $W^{1,q}_{+}(\Omega)$ then the solutions  of \eqref{MFGq_1_abstract}-\eqref{MFGq_2_abstract} are unique.
\end{proposition}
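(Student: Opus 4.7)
The plan is to run the classical Lasry--Lions monotonicity argument in the present abstract framework. Let $(m_i,u_i,\lambda_i)$, $i=1,2$, be two solutions of \eqref{MFGq_1_abstract} (for \eqref{MFGq_2_abstract} we also carry along the multipliers $p_i$). The key observation is that every term in the two equations has been given a weak formulation on $W^{1,q}(\Omega)$ and, by Theorem \ref{existence_MFG_without_density_constraints} (resp.\ \ref{existence_MFG_with_density_constraints}) combined with the Sobolev embedding $W^{1,q}(\Omega)\hookrightarrow C(\ov\Omega)$, we have both $m_i\in W^{1,q}(\Omega)$ and $u_i\in Y\subset W^{1,q'}(\Omega)$. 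Thus the HJB-type equation tested against $m_1-m_2\in W^{1,q}(\Omega)$ and the Fokker--Planck equation tested against $u_1-u_2\in W^{1,q'}(\Omega)$ are both well defined; note that since $\int(m_1-m_2)\dd x=0$ the ergodic constants $\lambda_i$ drop out.

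Subtracting the two HJB weak identities and testing against $m_1-m_2$ gives
\[
\int_\Omega \nabla(u_1-u_2)\cdot\nabla(m_1-m_2)\dd x+\int_\Omega [H(\cdot,\nabla u_1)-H(\cdot,\nabla u_2)](m_1-m_2)\dd x - \mathcal{I}_p = \langle D\cF(m_1)-D\cF(m_2), m_1-m_2\rangle,
\]
where $\mathcal{I}_p=0$ in the case of \eqref{MFGq_1_abstract} and $\mathcal{I}_p=\int_{\ov\Omega}(m_1-m_2)\dd(p_1-p_2)$ in the case of \eqref{MFGq_2_abstract}. Similarly, subtracting the two Fokker--Planck weak identities and testing against $u_1-u_2$ yields
\[
\int_\Omega \nabla(m_1-m_2)\cdot\nabla(u_1-u_2)\dd x + \int_\Omega \bigl[m_1\nabla_\xi H(\cdot,\nabla u_1)-m_2\nabla_\xi H(\cdot,\nabla u_2)\bigr]\cdot\nabla(u_1-u_2)\dd x=0.
\]
Subtracting the second relation from the first and regrouping the $H$ and $\nabla_\xi H$ terms by $m_1$ and $m_2$ produces the familiar identity
\[
\begin{split}
\int_\Omega m_1\bigl[H(\cdot,\nabla u_1)-H(\cdot,\nabla u_2)-\nabla_\xi H(\cdot,\nabla u_1)\cdot(\nabla u_1-\nabla u_2)\bigr]\dd x&\\
+\int_\Omega m_2\bigl[H(\cdot,\nabla u_2)-H(\cdot,\nabla u_1)-\nabla_\xi H(\cdot,\nabla u_2)\cdot(\nabla u_2-\nabla u_1)\bigr]\dd x&\\
=\langle D\cF(m_1)-D\cF(m_2), m_1-m_2\rangle+\mathcal{I}_p.&
\end{split}
\]
By the convexity of $H(x,\cdot)$ each bracket on the left is non-positive, and by Lemma \ref{harnack_neumann} one has $m_i>0$ in $\ov\Omega$, so the left-hand side is $\le 0$. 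On the right, strict convexity of $\cF$ in $W^{1,q}_+(\Omega)$ gives strict monotonicity of $D\cF$, so $\langle D\cF(m_1)-D\cF(m_2),m_1-m_2\rangle\ge 0$, with strict inequality when $m_1\ne m_2$.

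It remains to treat $\mathcal{I}_p$ in the constrained case, which I expect to be the only non-routine step. Expanding $\int(m_1-m_2)\dd(p_1-p_2)$ and using the two complementarity conditions $\spt(p_i)\subseteq\{m_i=\kappa\}$ together with $p_i\ge 0$ and $m_j\le\kappa$, one gets $\int m_i\dd p_i=\int\kappa\dd p_i$ and $\int m_j\dd p_i\le\int\kappa\dd p_i$ for $j\ne i$, hence $\mathcal{I}_p\ge 0$. Combining everything yields $0\ge \text{LHS}=\langle D\cF(m_1)-D\cF(m_2),m_1-m_2\rangle+\mathcal{I}_p\ge 0$, which forces $m_1=m_2$. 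Going back into the identity, since $m_i>0$ and $H(x,\cdot)$ is strictly convex, each bracket vanishes a.e., giving $\nabla u_1=\nabla u_2$ a.e.\ in $\Omega$; the mean-zero normalisation $u_i\in Y$ then yields $u_1=u_2$. Plugging back into the HJB equation tested against the constant function $1$ gives $\lambda_1=\lambda_2$, and for \eqref{MFGq_2_abstract} the equation itself then forces $p_1=p_2$ as elements of $\sM(\ov\Omega)$. The main subtlety, beyond the bookkeeping of signs, is the handling of the Radon-measure term $\mathcal{I}_p$ via the two support conditions; everything else is a direct transcription of the classical duality computation into the present abstract notation.
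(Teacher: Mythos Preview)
Your argument is correct and takes a genuinely different route from the paper. The paper exploits the variational structure already built up: since $\cF$ is convex the problems \eqref{prob:P1} and \eqref{prob:P2} are convex, hence the first--order optimality conditions are \emph{sufficient}, so any solution $(m,u,\lambda)$ (resp.\ $(m,u,p,\lambda)$) of the system yields an optimizer $(m,-m\nabla_\xi H(\cdot,\nabla u))$ of the variational problem. Strict convexity of $\cF$ then forces $m$ to be unique, strict convexity of $H^*(x,\cdot)$ (together with $m>0$) forces $w$, hence $\nabla u$, to be unique, and the remaining variables follow. Your approach is the direct Lasry--Lions duality computation performed on the system itself, bypassing the detour through the optimization problem. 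The paper's argument is shorter here precisely because the variational machinery has already been set up; yours is more self--contained and does not rely on the reverse implication ``solution of the system $\Rightarrow$ minimizer'', which the paper uses implicitly but never states.

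One small gap in the constrained case: testing the HJB identity against the constant $1$ gives only $\lambda_1|\Omega|-p_1(\ov\Omega)=\lambda_2|\Omega|-p_2(\ov\Omega)$, not $\lambda_1=\lambda_2$ directly. To separate $\lambda$ from $p$ you should instead test against a $\varphi\in C_c^\infty(\{m<\kappa\})$ with $\int_\Omega\varphi\,\dd x\neq 0$; this open set is non-empty since $\int_\Omega m\,\dd x=1<\int_\Omega\kappa\,\dd x$, and on it both $p_i$ vanish, giving $\lambda_1=\lambda_2$ and then $p_1=p_2$ from the full equation. This is exactly how the paper handles it.
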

\begin{proof} Let us consider first \eqref{MFGq_1_abstract}. Since $\cF$ is convex, \eqref{prob:P1} is a convex problem and so if $(m,u,\lambda)$ solves \eqref{MFGq_1_abstract} then  $(m,-m \nabla_\xi H(\cdot,\nabla u))$ solves \eqref{prob:P1}. Thus, the uniqueness of $m$ is a straightforward consequence of the strict convexity of $\cF$. Since $m>0$ and for all $x\in  \Om$ the map $\R^d\ni\xi \mapsto  \nabla_{\xi}H(x, \xi)$   is injective and $\R^d\ni w \mapsto H^{\ast}\left(x, \frac{w}{m(x)}\right)m(x)$ is strictly convex we have that $\nabla u$ is unique. Thus, uniqueness of $u$ in $Y$ follows and, as a consequence, the first equation in \eqref{MFGq_1_abstract} yields the uniqueness of $\lambda$. The proof of uniqueness of $(m,u)$ for system \eqref{MFGq_2_abstract} is the same as the previous one.  By considering test functions supported in $\{0<m<\kappa\}$ we get the uniqueness of $\lambda$, from which the uniqueness of $p$ follows. 
\end{proof}
\begin{remark} The previous uniqueness result for solutions of \eqref{MFGq_2_abstract} improves the one stated in \cite[Remark 4.2]{MesSil}.
\end{remark}
Let us detail  the novelty of our results. Compared to \cite{MesSil}, when $q>d$,  we  consider more general Hamiltonians and we prove that $m$ is strictly positive in $\ov{\Om}$ which allows us to eliminate the Lagrange multiplier $\mu \in \sM(\ov\Om)$ from the system $(MFG)_q$ in  \cite{MesSil}. As we have seen, we can also get rid of the density constraint and prove, using variational methods, the existence of solutions of \eqref{MFGq_1_abstract}. Most importantly,  we can consider, for both systems \eqref{MFGq_1_abstract} and \eqref{MFGq_2_abstract}, rather general right-hand sides for the HJB equation,  since we allow $\cF$ to be  non-convex. As an example of a class of functions we can deal with in \eqref{MFGq_1_abstract}, we consider
\begin{equation}\label{eq:F_model}\cF(m):= \int_{\Om} F\left(x,m(x),\nabla m(x)\right) \dd x, 
\end{equation}
where $F: \Omega \times \R \times \R^d \to \R$ is a Carath\'eodory function,  i.e. for all $(z,\xi) \in  \R \times \R^d $ we have $F(\cdot, z,\xi)$ is measurable and for a.e. $x \in \Om$ the function $F(x, \cdot, \cdot)$ is continuous. In addition, suppose that:
\begin{itemize}
\item[(i)] For a.e. $x \in \Om$ and all $z\in \R$ the function $F(x, z, \cdot)$ is convex.
\item[(ii)] For all $R>0$ there exists   $\gamma \in L^{1}(\Om)$   such that for a.e. $x\in \Om$
\begin{equation}\label{conditional_lower_bound}
F(x,z, \xi) \geq    \gamma (x)  \hspace{0.5cm} \forall \; 0\leq z \leq R, \; \; \xi  \in \R^d.
\end{equation}
\item[(iii)]   For a.e. $x\in \Om$ the function $F(x, \cdot, \cdot)$ is differentiable.  Moreover, for all $R>0$ there exists  $a_{0}  \in L^{1}(\Om)$ and $b_0=b_0(R) \geq 0$ such that for a.e. $x\in \Om$
$$
|F(x,z, \xi)| \leq a_{0}(x)+ b_0|\xi|^{q} \hspace{0.5cm} \forall \;  |z| \leq R, \; \; \xi  \in \R^d.
$$ 
\item[(iv)] For all $R>0$ there exists  $a_{1}  \in L^{1}(\Om)$, $a_{2}\in L^{q'}(\Om)$ and  $b_1=b_1(R)\geq 0$  such that for a.e. $x\in \Om$,  $|z|\leq R$  and $ \xi \in \R^{d}$ we have that 
$$\begin{array}{c}
|\partial_{z} F(x,z, \xi)|  \leq a_1(x)+   b_1 |\xi|^{q}, \hspace{0.5cm}|\nabla_{\xi} F(x,z, \xi)| \leq a_2(x)+   b_1  |\xi|^{q-1}.
\end{array}
$$
\end{itemize}
Since $q>d$, assumptions {\rm(i)-\rm(ii)} imply the weak lower semi-continuity of $\cF$ (see e.g. \cite[Corollary 3.24]{Dac}),  conditions  {\rm(ii)}-{\rm(iii)} imply that $\cF(m) \in \R$ for all $m \in  W^{1,q}(\Om) $   and the last condition {\rm(iv)} implies   that $\cF$ is G\^ateaux differentiable at every $m \in  W^{1,q}(\Omega) $ (see e.g. the proof of  \cite[Theorem 3.37]{Dac}). Thus, assumption \eqref{assumption_F} for $\cF$ is satisfied in this case. The  G\^ateaux derivative of $\cF$ at $m\in  W^{1,q}(\Om) $ is given by  linear continuous functional 
$$
\langle D\cF(m),z\rangle= \int_{\Om} \left[\partial_{z} F\left(x,m(x),\nabla m(x)\right)z(x)+\nabla_{\xi} F\left(x,m(x),\nabla m(x)\right) \cdot \nabla z(x) \right] \dd x,
$$
for all $z\in W^{1,q}(\Om)$. We obtain the following corollary of Theorem \ref{existence_MFG_without_density_constraints} and of Theorem \ref{existence_MFG_with_density_constraints} when  $F$ is independent of $\nabla m$. 
\begin{corollary}\label{existence_standard_MFGs} Let $f: \Om \times \R \to \R$  be a Carath\'eodory function. Assume that for all $R>0$ there exist $a$, $\gamma_1 \in L^{1}(\Omega)$ such that for  a.e. $x\in \Omega$
\begin{equation}\label{conditions_on_f}
\begin{array}{l}
\int_{0}^{z}f(x,z')\dd z' \geq \gamma_1(x)  \hspace{0.3cm} \mbox{and } \; \; |f(x, z)| \leq  a(x) \hspace{0.3cm} \forall \;  |z| \leq R. \\[6pt]
  \hspace{0.3cm}  
\end{array}\end{equation} 

Then,   \eqref{eq:MFG_2} admits at least one solution $(m,u,p,\lambda) \in W^{1,q}(\Om) \times Y \times \sM(\ov{\Om}) \times \R$. If, in addition, there exists   $\gamma_2 \in L^{1}(\Omega)$ such that $\int_{0}^{z}f(x,z')\dd z' \geq \gamma_2 (x)$ for a.a. $x\in \Omega$ and $z \geq 0$, 
then system \eqref{eq:MFG_1} admits at least one solution  $(m,u,\lambda) \in W^{1,q}(\Om) \times Y \times \R$. In both cases, we have  the additional regularity  $u\in W^{1,s}(\Om)$   for all $s\in(1,d/(d-1)).$ 
\end{corollary}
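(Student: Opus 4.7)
The plan is to reduce Corollary \ref{existence_standard_MFGs} to Theorems \ref{existence_MFG_without_density_constraints} and \ref{existence_MFG_with_density_constraints} by associating to $f$ the functional
\[
\cF(m):=\int_{\Om} F(x,m(x))\dd x, \qquad F(x,z):=\int_0^z f(x,z')\dd z',
\]
checking that $\cF$ satisfies assumption \eqref{assumption_F}, identifying its G\^ateaux derivative with the multiplication operator by $f(\cdot,m(\cdot))$, and finally performing a regularity bootstrap on the HJB equation.

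First I would verify \eqref{assumption_F} for $\cF$. Since $q>d$, the Sobolev embedding $W^{1,q}(\Om)\hookrightarrow C(\overline{\Om})$ is compact, so any sequence $m_n\rightharpoonup m$ in $W^{1,q}(\Om)$ converges uniformly, hence is uniformly bounded by some $R>0$. The pointwise bound $|f(x,z)|\le a(x)$ for $|z|\le R$, continuity of $F(x,\cdot)$ inherited from that of $f(x,\cdot)$, and dominated convergence yield $\cF(m_n)\to\cF(m)$, which is stronger than weak lower semicontinuity. The same DCT computation produces
\[
\langle D\cF(m),z\rangle=\int_{\Om} f(x,m(x))z(x)\dd x\qquad\forall\; z\in W^{1,q}(\Om),
\]
with $f(\cdot,m(\cdot))\in L^{\infty}(\Om)\subset(W^{1,q}(\Om))^{*}$. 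The local lower bound $\int_0^z f\ge\gamma_1$ for $|z|\le R$ gives $\cF(m)\ge\int_\Om \gamma_1\dd x\ge-\|\gamma_1\|_1=:C_R$ whenever $0\le m\le R$, completing the verification of \eqref{assumption_F}.

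With \eqref{assumption_F} at hand, Theorem \ref{existence_MFG_with_density_constraints} applied to $\cF$ yields $(m,u,p,\lambda)\in W^{1,q}(\Om)\times Y\times\sM(\ov\Om)\times\R$ solving \eqref{MFGq_2_abstract}, which is exactly \eqref{eq:MFG_2} once we identify $A^*u$ with $-\Delta u$ (in the weak Neumann sense) and $D\cF(m)$ with $f(\cdot,m(\cdot))$. For the density-constrained system, the constraint $m\le\kappa$ automatically bounds $m$ in $L^\infty$, so the local lower bound above suffices and no extra hypothesis is required. Under the additional condition $F(x,z)\ge\gamma_2(x)$ for all $z\ge 0$, we get $\cF(m)\ge\int_\Om\gamma_2\dd x$ uniformly on $W^{1,q}_+(\Om)$, so Theorem \ref{existence_MFG_without_density_constraints} applies and produces a solution of \eqref{eq:MFG_1}.

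For the last claim I would rewrite the HJB equation as
\[
-\Delta u=f(\cdot,m)-H(\cdot,\nabla u)-\lambda\quad\iin\ \Om,\qquad \nabla u\cdot n=0\quad\oon\ \partial\Om.
\]
Since $m\in W^{1,q}(\Om)\hookrightarrow C(\overline{\Om})$ is bounded, $f(\cdot,m)\in L^{\infty}(\Om)$; the growth condition \eqref{hyp:1} together with $\nabla u\in L^{q'}(\Om)$ gives $H(\cdot,\nabla u)\in L^1(\Om)$, so the right-hand side lies in $L^1(\Om)$. Classical Boccardo--Gallou\"et/Stampacchia regularity for the Neumann Laplacian with $L^1$ data then yields $u\in W^{1,s}(\Om)$ for every $s\in(1,d/(d-1))$. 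The most technical ingredient here is the $W^{1,s}$ regularity from $L^1$ data, which must be cited from the literature; every other step is a direct application of the compact embedding $W^{1,q}\hookrightarrow C(\overline{\Om})$ and dominated convergence, and it is precisely the hypothesis $q>d$ that makes all of these arguments go through simultaneously.
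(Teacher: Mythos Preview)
Your approach is essentially the paper's: define $\cF$ via the antiderivative $F$, verify \eqref{assumption_F}, apply Theorems \ref{existence_MFG_without_density_constraints} and \ref{existence_MFG_with_density_constraints}, and then invoke Stampacchia-type regularity for the Neumann Laplacian with $L^{1}$/measure data. The paper's proof is just the terse version of what you wrote.

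One slip to fix: you twice assert $f(\cdot,m(\cdot))\in L^{\infty}(\Om)$, but the hypothesis only gives $|f(x,z)|\le a(x)$ with $a\in L^{1}(\Om)$, so you get $f(\cdot,m(\cdot))\in L^{1}(\Om)$, not $L^{\infty}$. This does not damage the argument---$L^{1}(\Om)\subset (W^{1,q}(\Om))^{*}$ because $W^{1,q}(\Om)\hookrightarrow C(\ov\Om)$, so the G\^ateaux derivative is still well defined, and the regularity theorem you cite (the paper uses \cite[Th\'eor\`eme 9.1]{stampacchia3}) is precisely designed for $L^{1}$ or measure right-hand sides. Also, for \eqref{eq:MFG_2} the HJB equation reads $-\Delta u=f(\cdot,m)-H(\cdot,\nabla u)+p-\lambda$ with $p\in\sM_{+}(\ov\Om)$, so the right-hand side is a bounded Radon measure rather than merely $L^{1}$; the same Stampacchia result covers this case, but you should state it.
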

\begin{proof} Consider $F(x,z):=\int_{0}^{z}f(x,z')\dd z'$ in \eqref{eq:F_model}. Since \eqref{conditions_on_f} implies \eqref{assumption_F}, existence of a solution $(m,u,p,\lambda) \in W^{1,q}(\Om) \times Y \times \sM(\ov{\Om}) \times \R$ of \eqref{eq:MFG_2} follows directly from Theorem \ref{existence_MFG_with_density_constraints}. Analogously,  $\int_{0}^{z}f(x,z')\dd z' \geq \gamma_2 (x)$ for all $m\geq 0$ implies that $\cF$ has a global lower bound on $W^{1,q}_{+}(\Om)$.  Hence, existence of a solution  $(m,u,\lambda) \in W^{1,q}(\Om) \times Y \times \R$ of \eqref{eq:MFG_1} follows directly from Theorem \ref{existence_MFG_without_density_constraints}. Since in both systems $m \in C(\ov{\Om})$, assumption \eqref{conditions_on_f} also implies that $f(x,m(x)) \in L^{1}(\Omega)$ and the $ W^{1,s}$ regularity for $u$ ($s \in(1,d/(d-1))$), in both systems, follows from  \cite[Th\'eor\`eme 9.1]{stampacchia3}.
\end{proof}
Now, we comment on some other possible choices of $\cF$. 

\begin{remark}{\rm(i)} A simple and interesting example is given by $f(x,z)= |z|^{r}$, where $r>0$. In this case, $F(x, z)= \frac{1}{r+1}|z|^{r+1}$ if $z>0$ and $F(x, z)= -\frac{1}{r+1}|z|^{r+1}$, otherwise. Thus, the assumptions of Theorem \ref{existence_standard_MFGs} and Proposition \ref{uniqueness} are satisfied and so systems \eqref{eq:MFG_1} and \eqref{eq:MFG_2} admits   unique solutions. Note that the growth of $f$ is arbitrary, showing one the advantages  of the variational approach {\rm(}compare with  \cite{MR3160525,MR3333058,pimentel2015regularity} where the growth of $f$ is restricted{\rm)}.  On the other hand, if we consider $f(x,z)= -|z|^{r}$, its primitive is not bounded from below in $[0,\infty)$ and   the assumptions of Theorem \ref{existence_standard_MFGs} are not satisfied for problem \eqref{eq:MFG_1}. However, they are satisfied for  \eqref{eq:MFG_2} and the existence of at least one solution to  \eqref{eq:MFG_2} is ensured also in this case. \smallskip\\
{\rm(ii)} In order to exemplify the possible dependence of $\cF$ on $\nabla m$,  let us  take $\cF(m):= \frac12 \int_{\Om}|\nabla m|^{2} \dd x$. In this case Theorem \ref{existence_MFG_without_density_constraints} yields the existence of weak  solution of 
$$
\left\{
\begin{array}{rll} 
- \Delta u+ H(\cdot,\nabla u) +\lambda &= -\Delta m ,  &  \iin\ \Om, \\[6pt]
\nabla (u-m)\cdot n &= 0 ,   & \oon\ \partial \Om, \\[6pt]
-\Delta m - \diver\left(m \nabla_\xi H(\cdot,\nabla u)\right)  &= 0,  & \iin\ \Omega, \\[6pt]
(\nabla m+m \nabla_\xi H(\cdot,\nabla u))\cdot n &= 0,   & \oon\ \partial\Omega,\\[6pt]
\ds\int_\Omega m\dd x=1,&     m(x)> 0, &  \iin\ \ov{\Omega}.
\end{array}
\right.
$$
Moreover, by Proposition \ref{uniqueness}, the solution is unique. \smallskip\\
{\rm(iii)}  We can also consider a non-local dependence on $m$. For instance, setting $\Omega_{\varepsilon}:=\{x \in \Om \; ; \; d(x,\partial \Om)>\varepsilon\}$, where $d(\cdot, \partial \Om)$ is the distance function to $\partial \Om$,  we can take $F$ as before and
$$ \cF(m):= \int_{\Omega_{\varepsilon}} F(x, \rho_0 \ast m, \rho_{1}  \ast \partial_{x_1} m(x), ..., \rho_{d}  \ast \partial_{x_d} m(x)) \dd x, $$
for some given regular kernels $\rho_0,..., \rho_d$ supported on  $B_\e(0)$. In this case, it is easy to check  that \eqref{assumption_F} holds  without requiring the convexity of $F(x,z, \cdot)$. 
\end{remark}

Now let $f: \Om \times \R \to \R$  be a Carath\'eodory function satisfying that there exists $\gamma_2 \in L^{1}(\Om)$ such that $\int_{0}^{z}f(x,z')\dd z' \geq \gamma_2 (x)$ for  a.e.  $x\in \Omega$ and $z \geq 0$. Moreover, assume that for all $R>0$ there exists $a \in L^{1}(\Omega)$ such that for  a.e.  $x\in \Omega$ and $ |z| \leq R$ we have that $|f(x, z)| \leq  a(x)$. Under these assumptions, Corollary \ref{existence_standard_MFGs}  ensures the existence of at least one solution $(m,u,\lambda) \in W^{1,q}(\Om) \times Y \times \R$ of \eqref{eq:MFG_1}. Using a bootstrapping argument, we show in the next result some  additional  local regularity  properties  for $(m,u)$.

\begin{proposition}\label{regularity_boots} 
Consider the above setting and suppose that  $\Om\ni x\mapsto f(x,m(x)) \in \R$ belongs to $L^r(\Om)$ for some $r > d$ and that $\nabla_{\xi}H(x, \cdot)$ is H\"older continuous, uniformly on $x\in \Om$. Then, there exist    $\alpha_0$, $\alpha_1 \in (0,1)$  such that 
\begin{equation}\label{reg1} 
u\in C^{1,\a_0}_{\rm{loc}}(\Omega)\;\;\; {\rm{and}}\;\;\; m\in C^{1,\a_1}_{\rm{loc}}(\Omega).
\end{equation}
\end{proposition}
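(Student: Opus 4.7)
The plan is a two-stage bootstrap: first upgrade $u$ from the Stampacchia-type regularity $W^{1,s}$ to $C^{1,\alpha_0}_{\text{loc}}$ via interior Calder\'on--Zygmund estimates applied to the HJB equation, then apply divergence-form Schauder estimates to the Kolmogorov equation for $m$.

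\emph{Stage 1 (regularity of $u$).} From Corollary \ref{existence_standard_MFGs} one has $u\in W^{1,s}(\Omega)$ for every $s\in(1,d/(d-1))$. Since $q>d$ forces $q'<d/(d-1)$, the interval $(q',d/(d-1))$ is non-empty, and I would fix a starting exponent $p_0:=s$ in it, so that $p_0/q'>1$. Viewing the HJB equation as
$$-\Delta u=\bigl[f(\cdot,m)-\lambda\bigr]-H(\cdot,\nabla u),$$
the right-hand side is controlled, via \eqref{hyp:1} and $f(\cdot,m)\in L^r$, in $L^{\min\{r,\,p_0/q'\}}_{\text{loc}}(\Omega)$. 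Interior Calder\'on--Zygmund then gives $u\in W^{2,\min\{r,\,p_0/q'\}}_{\text{loc}}$, and Sobolev embedding upgrades $\nabla u$ from $L^{p_0}$ to $L^{p_1}_{\text{loc}}$ with $p_1=dp_0/(q'd-p_0)$ (while $p_0/q'<d$). The condition $p_0>q'$ is equivalent to $p_1>p_0$, and since the only positive fixed point of $p\mapsto dp/(q'd-p)$ is $d/(q-1)<q'$, the sequence $(p_k)$ is strictly increasing and unbounded, exceeding $dq'$ in finitely many steps. Once $p_k/q'>d$, one has $\nabla u\in L^\infty_{\text{loc}}$ and hence $H(\cdot,\nabla u)\in L^\infty_{\text{loc}}$, so the right-hand side of the HJB lies in $L^r_{\text{loc}}$. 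This delivers $u\in W^{2,r}_{\text{loc}}\hookrightarrow C^{1,\alpha_0}_{\text{loc}}$ with $\alpha_0=1-d/r$.

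\emph{Stage 2 (regularity of $m$).} I would recast the Kolmogorov equation in divergence form as
$$\Delta m=-\operatorname{div}\bigl(m\,\nabla_\xi H(\cdot,\nabla u)\bigr).$$
From Stage 1, $\nabla u\in C^{0,\alpha_0}_{\text{loc}}$; combined with the H\"older continuity of $\nabla_\xi H(x,\cdot)$ uniformly in $x$ (together with its continuity in $x$ from Lemma \ref{basiclemmaonH}) and with $m\in W^{1,q}\hookrightarrow C^{0,1-d/q}(\ov\Omega)$, the vector field $g:=m\,\nabla_\xi H(\cdot,\nabla u)$ is locally H\"older on $\Omega$ with some exponent $\gamma\in(0,1)$. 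Classical interior Schauder estimates for the divergence-form Poisson equation $\Delta m=\operatorname{div}(-g)$ then yield $m\in C^{1,\alpha_1}_{\text{loc}}(\Omega)$.

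\emph{Main obstacle.} The delicate step is Stage 1: the subquadratic growth $q'<2$ of $H$ rules out any naive Moser-style scheme and forces a careful Sobolev--Calder\'on--Zygmund recursion in which each step gains only polynomially. The crucial observation is that the starting window $(q',d/(d-1))$ is non-empty \emph{precisely} because of the standing hypothesis $q>d$, which simultaneously guarantees a strict gain at every iteration and eventual escape above the critical exponent. Since only local regularity is claimed, the Neumann boundary conditions do not enter the bootstrap.
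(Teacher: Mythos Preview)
Your proposal is correct and follows essentially the same two-step bootstrap as the paper: a Calder\'on--Zygmund/Sobolev iteration on the HJB equation (the paper tracks the exponent of $|\nabla u|^{q'}$ rather than of $\nabla u$, but the two recursions are equivalent via $p_k=q'\delta_k$, and the paper shows termination by lower-bounding the increment rather than by your fixed-point argument), followed by divergence-form Schauder for $m$. One small slip: ``$p_0>q'$ is equivalent to $p_1>p_0$'' should read ``implies'', since the actual threshold is $p_0>d(q'-1)=d/(q-1)$; but as $d/(q-1)<q'$ (precisely because $q>d$), your starting choice still guarantees a strict gain at every step and the argument goes through.
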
 

\begin{proof}
{\it Step 1.} We show that there exists $k>d$ such that $u\in W_{\rm{loc}}^{2,k}(\Omega)$.  By the classical Sobolev embeddings, this implies  that $u\in C_{\rm{loc}}^{1,\a_0}(\Omega)$ (for some $ \a_0  \in (0,1)$).   Let  $r_1\in (q',d/(d-1))$. By Corollary  \ref{existence_standard_MFGs},   we have  that $|\nabla u|^{q'}\in L^{r_1/q'}(\Om)$ and so, by \eqref{hyp:1}, we have that $H(\cdot,\nabla u)\in L^{r_1/q'}(\Om)$. Furthermore, since $f(\cdot, m(\cdot))\in L^r(\Om)$ and $r>\delta_1:=r_1/q'$,   the classical regularity theory for elliptic equations (see \cite{gilbarg}) implies that $u\in W_{\rm{loc}}^{2,\d_1}(\Om)$. In particular, the Sobolev inequality (see e.g. \cite{adams}) yields $u\in W_{\rm{loc}}^{1,\frac{d\d_1}{d-\d_1}}(\Omega)$ and so $|\nabla u|^{q'}\in L_{\rm{loc}}^{\delta_2}(\Omega)$ with $\delta_2:=\frac{d\d_1}{q'(d-\d_1)}$. We easily check that $\delta_2>\delta_1$ and so we improve the regularity of $u$ to obtain that $u\in W^{2,\min\{r,\delta_2\}}_{\rm{loc}}(\Omega)$. Thus, if  $\delta_2 > d$ we obtain the first relation in \eqref{reg1}. Otherwise, for $i\geq 2$, inductively we define  the sequence $\d_{i+1}:=  \frac{d\d_i}{(d-\d_i)q'}$. Since $\delta_{i+1}-\delta_{i} \geq (q'+d-dq')/(d-\delta_i)q'$ and $q'+d-dq'>0$, we get that 
$\delta_{i+1}-\delta_{i} \geq (q'+d-dq')/ d q'$ if $d> \delta_i$. Therefore,  after a finite number of steps we get the existence of $  i^*\ge 2$ such that $\d_{i^*}>d$ and $u\in W_{\rm{loc}}^{2,k}(\Omega)$ with $k:=\min\{r,\d_{i^*}\}>d$.   \smallskip\\
{\it Step 2.} Let us prove that $m\in C_{\rm{loc}}^{1,\a_1}(\Omega)$ for some $\a_1 \in (0,1)$. Since $m\in W^{1,q}(\Om)$ and $q>d$, we already have that $m$ is  H\"older continuous. Having $u\in C_{\rm{loc}}^{1,\a_0}(\Om)$, this implies that  $\nabla u\in C_{\rm{loc}}^{0,\a_0}(\Om)^d,$ hence $m\nabla_\xi H(\cdot,\nabla u)\in C_{\rm{loc}}^{0,\hat{\a}}(\Om)^d$, for some $\hat{\a}\in (0,1)$. Using a Schauder-type estimate (see  \cite[Theorem 5.19]{GiaMar}) we get that $m\in C_{\rm{loc}}^{1,\a_1}(\Om)$ for some $\a_1 \in (0,1)$. 
%
%
\end{proof}
\begin{remark}\label{generalization_regularity_density}  If $f$ satisfies    \eqref{conditions_on_f}, with $a\in L^{r}(\Om)$ for some $r>d$,    and   $(m,u,p,\lambda)$ solves \eqref{eq:MFG_2}, we have that $(u,m)$ admits the   regularity \eqref{reg1}  locally in the open set $\{0 < m < \kappa\}$ {\rm(}see \cite[Proposition 4.3]{MesSil} for a similar result in a simpler case{\rm)}. 
\end{remark}
%
%

\section{An application to multipopulation systems}\label{multipopulation_section} In this section we  show a simple application of our results to the study of systems of the form
\begin{equation}\label{MFGq_multipopulation}
\left\{
\begin{array}{rll} 
- \Delta u_{i}+ H^i(\cdot,\nabla u_{i})+ \lambda_i &= f^i(x,(m_{i})_{i=1}^{N}) ,  &  \iin\ \Om, \\[6pt]
\nabla u_{i}\cdot n &= 0 ,   & \oon\ \partial \Om, \\[6pt]
-\Delta m_{i} - \diver\left(m_{i} \nabla_\xi H^i(\cdot,\nabla u_{i})\right)  &= 0,  & \iin\ \Omega, \\[6pt]
(\nabla m_{i}+m_{i} \nabla_\xi H^i(\cdot,\nabla u_{i}))\cdot n &= 0,   & \oon\ \partial\Omega,\\[6pt]
\ds\int_\Omega m_{i}\dd x=1,&     m_{i}(x)> 0, &  \iin\ \ov{\Omega},\\[6pt]
i=1, ..., N,
\end{array}\tag{MFG$_N$}
\right.  
\end{equation}
where $N\geq 2$ (see \cite{MR3333058,MR3530210}). Here $(m_{i})_{i=1}^{N}$ describes the densities of $N$ populations. The Hamiltonians $H^i:\Om\times\R^d\to\R$ ($i=1,\dots,N$) are supposed to satisfy the assumptions in Section \ref{prelim} (see in particular \eqref{hyp:1}). The given functions $f^i:\Om\times[0,+\infty)^N\to\R$ ($i=1,\dots,N$) are such that  for all $\zeta \in \R^{N}$ the function $f^i(\cdot, \zeta)$ is measurable and for a.e. $x\in \Omega$ the function  $f^{i}(x, \cdot)$ is continuous.  Suppose that  

\begin{equation}\label{multipopulation_hyp1}\begin{array}{c}
\exists \; \gamma_i \in L^{1}(\Om) \; \; \mbox{such that } \int_{0}^{z}f^{i}(x, z_i, (\zeta_j)_{j\neq i})\dd z_i \geq \gamma_i (x) \\[6pt]
 \mbox{for  a.e.  } x\in \Omega, \; \; \forall \;  z \geq 0, \; \; \forall \; (\zeta_1, ..., \zeta_{i-i}, \zeta_{i+1},..., \zeta_N) \in [0,+\infty)^{N-1}, 
\end{array}
\end{equation}
where we have denoted
$$ f^{i}(x, z_i, (\zeta_j)_{j\neq i}):= f^{i}(x,\zeta_1, ..., \zeta_{i-i}, z_i, \zeta_{i+1},..., \zeta_N).
$$
 Moreover, we assume that   for all $i=1,\dots,N$
\begin{equation}\label{multipopulation_hyp2}\begin{array}{c}
\forall \; R>0, \; \; \exists \; a_{i} \in L^{1}(\Omega) \; \; \mbox{such that } \; \; |f^{i}(x, z, (\zeta_j)_{j\neq i})|  \leq a_{i}(x), \\[6pt]
 \mbox{for  a.e. } x\in \Omega, \; \; \forall \;  | z|  \leq R, \; \; \forall \; (\zeta_1, ..., \zeta_{i-i}, \zeta_{i+1},..., \zeta_N) \in [0,+\infty)^{N-1},  
\end{array}
\end{equation}
and that 
\begin{equation}\label{multipopulation_hyp3}\begin{array}{c}
\forall \;   (\zeta_1, ..., \zeta_{i-i},  \zeta_{i+1},..., \zeta_N) \in [0,+\infty )^{N-1} \\[6pt]
\mbox{the map  $z \in [0,+\infty) \to f^{i}(x, z, (\zeta_j)_{j\neq i}) \in \R$ is non-decreasing.} 
\end{array}
\end{equation}
We have the following result:
\begin{proposition}\label{existence_result_multipopulation_I} Suppose that for all $i=1,\dots,N$ the function $f^i$ satisfies \eqref{multipopulation_hyp1}, \eqref{multipopulation_hyp2} and \eqref{multipopulation_hyp3}. Then, system  \eqref{MFGq_multipopulation} admits at least one solution $m=(m_1,..., m_{N})$, $u=(u_{1},...,u_N)$ and $\lambda=(\lambda_1,...,\lambda_N)$, where, for all $i=1,..., N$,  $m_{i} \in W^{1,q}(\Om)$ and $u_i \in W^{1,s}(\Om)$ {\rm(}for all $s \in (1, d/(d-1)${\rm)}.
\end{proposition}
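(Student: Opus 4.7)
The plan is to apply a Kakutani--Fan--Glicksberg fixed-point argument to a set-valued map that decouples the populations. For any frozen admissible vector of densities $\tilde m=(\tilde m_1,\dots,\tilde m_N)$, consider the auxiliary scalar coupling $f^i_{\tilde m}(x,z):=f^i(x,z,(\tilde m_j(x))_{j\neq i})$. By composition, $f^i_{\tilde m}$ is Carathéodory, and \eqref{multipopulation_hyp1}--\eqref{multipopulation_hyp2} ensure that it satisfies the hypotheses of Corollary \ref{existence_standard_MFGs} for \eqref{eq:MFG_1}, with bounds $\gamma_i$ and $a_i$ independent of $\tilde m$. Hence the $i$-th decoupled MFG admits at least one solution $(m_i,u_i,\lambda_i)\in W^{1,q}(\Omega)\times Y\times\R$.

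First I would fix the set
\[
\mathcal{K}:=\Bigl\{m\in W^{1,q}(\Omega)^N \;:\; m_i\geq 0,\ \int_\Omega m_i\,dx=1,\ \|m_i\|_{1,q}\leq M\ \text{for all }i\Bigr\},
\]
where $M$ is the constant furnished by \eqref{estimates_solution_optimization_problem} applied to each decoupled problem. Uniformity of $M$ in $\tilde m$ follows because $\cF^i_{\tilde m}(1/|\Omega|)\leq |\Omega|^{-1}\|a_i\|_1$ (via \eqref{multipopulation_hyp2} with $R=1/|\Omega|$) and the lower bound $\cF^i_{\tilde m}\geq\int_\Omega \gamma_i\,dx$ are both independent of $\tilde m$. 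Since $q>d$, the compact embedding $W^{1,q}(\Omega)\hookrightarrow C(\overline{\Omega})$ makes $\mathcal{K}$ a convex, compact subset of $C(\overline{\Omega})^N$. Define a set-valued map $T$ from $\mathcal{K}$ to itself by letting $T(\tilde m)$ be the collection of all $m\in\mathcal{K}$ whose components $m_i$ arise as density parts of solutions of the $i$-th decoupled MFG with coupling $f^i_{\tilde m}$. The existence result and the a priori estimates show that $T(\tilde m)$ is nonempty and contained in $\mathcal{K}$; its convexity is a consequence of the convexity of each decoupled variational problem, which is in turn guaranteed by the monotonicity \eqref{multipopulation_hyp3} of $f^i$ in its $i$-th entry.

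The main step, which I expect to be the principal obstacle, is the closed-graph property of $T$ in $C(\overline{\Omega})^N$. Given sequences $\tilde m^k\to\tilde m$ and $m^k\in T(\tilde m^k)$ with $m^k\to m$ uniformly, I would work at the level of the variational problem: the associated minimizing pairs $(m_i^k, w_i^k):=(m_i^k,-m_i^k\nabla_\xi H^i(\cdot,\nabla u_i^k))$ for the cost $\cB_q^i+\cF^i_{\tilde m^k}$ subject to $G(m_i^k,w_i^k)=0$ are uniformly bounded in $W^{1,q}(\Omega)\times L^q(\Omega)^d$, so one extracts weak limits $(m_i,w_i)$ and passes to the limit in the linear PDE constraint directly. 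The weak lower semicontinuity of $\cB_q^i$ from Theorem \ref{thm:subdiff}{\rm(i)}, together with dominated convergence for $\cF^i_{\tilde m^k}(m_i^k)$ (using \eqref{multipopulation_hyp2} and the uniform $L^\infty$ bound on $\tilde m^k$), shows that $(m_i,w_i)$ is optimal for the problem frozen at $\tilde m$. The multipliers $(u_i,\lambda_i)$ are then recovered via the subdifferential calculus exactly as in the proof of Theorem \ref{existence_MFG_without_density_constraints}, so that $m\in T(\tilde m)$. Kakutani--Fan--Glicksberg then yields a fixed point $m^\ast\in T(m^\ast)$ which, together with its associated $(u_i,\lambda_i)$, solves \eqref{MFGq_multipopulation}; the local $W^{1,s}$-regularity claim for each $u_i$ follows exactly as in Corollary \ref{existence_standard_MFGs}. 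Working at the variational level rather than with the Euler--Lagrange system itself avoids any delicate analysis of the singular ratio $w_i^k/m_i^k$ at points where $m_i^k$ is small.
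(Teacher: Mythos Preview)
Your proposal is correct and follows essentially the same approach as the paper: both apply a Kakutani--Fan--Glicksberg fixed-point argument to the set-valued map sending a frozen density vector to the set of minimizers of the decoupled variational problems, using \eqref{multipopulation_hyp3} for convexity of the values, the uniform estimates \eqref{estimates_solution_optimization_problem} (with bounds independent of the frozen densities thanks to \eqref{multipopulation_hyp1}--\eqref{multipopulation_hyp2}) for compactness, and weak lower semicontinuity of $\cB_q$ together with dominated convergence for $\cF^i$ to obtain the closed-graph property. The only cosmetic difference is that the paper keeps the full pair $(m,w)$ and works with the weak topology on $(W^{1,q}(\Omega)\times L^q(\Omega)^d)^N$, whereas you project onto the density component and exploit the compact embedding $W^{1,q}(\Omega)\hookrightarrow C(\overline{\Omega})$ to work in a norm-compact set; either choice is fine.
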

\begin{proof} Let us define $\cF^i: W^{1,q}(\Om)^{N} \to \R$ as
$$\begin{array}{c}
\cF^i(m_i, (m_{j})_{j\neq i}):= \int_{\Om} F^i(x, m_i(x), (m_j(x))_{j\neq i})\dd x\\[6pt] 
\mbox{where } 
F^i(x, z, (\zeta_j)_{j\neq i}):= \int_{0}^{z}f^{i}(x, z_i, (\zeta_j)_{j\neq i})\dd z_i. 
\end{array}
$$
Consider the space $X:= \left(W^{1,q}(\Om)\times L^{q}(\Om)^{d}\right)^{N}$, endowed with the weak-topology,  and  the set-valued map $T: X\to 2^{X}$ defined as
$$
 T(x):= \prod_{i=1}^N\mbox{argmin}_{(m',w') \in W^{1,q}(\Om) \times L^{q}(\Om)^d}\left\{ \cB_{q}(m',w') + \ds \chi_{G^{-1}(0)}(m',w')+\cF^{i}(m', (m_j)_{j\neq i})\right\},
$$
where $x= \left( (m_1,w_1), ..., (m_{N}, w_{N}) \right)$.  By Theorem \ref{existence0}, the embedding $W^{1,q}(\Om)\hookrightarrow C(\ov{\Om})$ and our assumptions, we have that $T(x)$ is a non-empty compact set for all $x\in X$. Assumption \eqref{multipopulation_hyp3} implies that $T(x)$ is also convex. Moreover, by \eqref{estimates_solution_optimization_problem} and \eqref{multipopulation_hyp2}, for $x\in X$  we have the existence of $c>0$, independent of $x$, such that $\|\bar{m}_i\|_{1,q} \leq c$ and $\|\bar{w}_i\|_q \leq c$ for all $\bar{m}=(\bar{m}_1,...,\bar{m}_{N})$ and  $\bar{w}=(\bar{w}_1,...,\bar{w}_N)$ such that $(\bar{m},\bar{w}) \in T(x)$. Therefore, defining 
$$
K_{c}:= \{(m,w) \in X \; ; \; \|m_i\|_{1,q} \leq c, \; \; \|w_i\|_q \leq c, \; \; \forall \; i=1,...,N\},
$$  
we have that $T(K_c) \subseteq K_c$. Now, let us prove that $T$ is upper-semicontinuous, i.e. $T^{-1}(M):=\{x'\in X \; ; \; T(x')\cap M \ne \emptyset\}$ is closed for all closed sets $M\subseteq X$. Indeed, let  $x^n=( (m_1^n, w_1^n),...., (m_N^n, w_N^n)) \in T^{-1}(M)$ such that $x_n \to x=( (m_1, w_1),...., (m_N, w_N))$. By definition, we have the existence of  $\bar{x}^n=( (\bar{m}_1^n, \bar{w}_1^n),...., (\bar{m}_N^n, \bar{w}_N^n)) \in M$ such that
$$\begin{array}{c}
\cB_{q}(\bar{m}_i^n,\bar{w}_i^n) + \ds \chi_{G^{-1}(0)}(\bar{m}_i^n,\bar{w}_i^n)+\cF^{i}(\bar{m}_i^n, (m_j^n)_{j\neq i}) \\[6pt]
\leq \cB_{q}(m',w') + \ds \chi_{G^{-1}(0)}(m',w')+\cF^{i}(m', (m_j^n)_{j\neq i}) \; \; \forall \; (m',w') \in W^{1,q}(\Om)\times L^{q}(\Omega)^d.
\end{array}
$$
By  \eqref{estimates_solution_optimization_problem} we have that $\bar{x}_n$ is bounded in  $\left(W^{1,q}(\Om)\times L^{q}(\Om)^{d}\right)^{N}$  and so, up to some subsequence,  there exists $\bar{x}=( (\bar{m}_1, \bar{w}_1),...., (\bar{m}_N, \bar{w}_N))$ such that $\bar{x}^n\to \bar{x}$ in $X$ and so, since $M$ is closed, $\bar{x}\in M$. Under our assumptions,  the Lebesgue's dominated convergence theorem implies the   weak continuity of  $ \cF^{i}$ in $W^{1,q}(\Om)^{N}$,  and so we can pass to the limit to obtain that $x\in T^{-1}(M)$. By Kakutani fixed-point theorem, there exists $x=((m_1,w_1),..., (m_{N},w_{N}))$ such that $x \in T(x)$. The result follows from Corollary \ref{existence_standard_MFGs}.
\end{proof}
\begin{remark} 
{\rm(i)} As we pointed out, the result in Proposition \ref{existence_result_multipopulation_I} is a simple consequence of the variational method we presented in the previous sections.  We refer the reader to \cite{MR3333058,cirant_verzini,MR3530210,MR3597009} for a more detailed study, and sharper results, based on PDE arguments tackling directly system \eqref{MFGq_multipopulation}.
\smallskip\\
{\rm(ii)} The local regularity results presented in Proposition \ref{regularity_boots} for the one-population case directly extend to the solutions of system \eqref{MFGq_multipopulation}.
\end{remark}

We can also consider the instance of \eqref{MFGq_multipopulation} where the functions $f^{i}$ ($i=1,...,N$) satisfy that there exists a  Carath\'eodory function function $F: \Omega \times \R^{N} \to \R$ such that  for a.e. $x\in \Om$ the function $F(x,\cdot)$ is differentiable  and for all $i=1,...,N$ we have that  
\begin{equation}\label{global_potential}
f^{i}(x,\zeta_i,(\zeta_j)_{j\neq i})= \partial_{\zeta_i} F(x,\zeta) \; \; \; \mbox{for  a.e.  $x\in \Omega$, $\forall \; \zeta=(\zeta_1, ..., \zeta_N)\in \R^{N}$.} 
\end{equation}
As suggested in \cite[Remark 15]{MR3333058}, in this case system \eqref{MFGq_multipopulation} can be found as the optimality condition of the optimization problem 
\begin{equation}\label{prob:PN}
\inf_{m_i \in W^{1,q}(\Om), \; w_i\in L^{q}(\Om)^d \atop
i=1,...,N} \; \sum_{i=1}^{N} \left[\cB_{q}(m_{i},w_{i}) + \chi_{G^{-1}(0)}(m_{i},w_{i})\right] + \int_{\Om}F(x,m_1(x),...,m_{N}(x))\dd x. \tag{P$_N$}
\end{equation}
Indeed, suppose that  $F$ satisfies that there exists  $\gamma \in L^{1}(\Om)$ such that 
\begin{equation}\label{condition_F_symetric_caseI}
F(x, \zeta) \geq \gamma(x) \hspace{0.3cm} \mbox{ for a.a. $x\in \Om$ and for all $\zeta \in \R^N$}. \end{equation}
Moreover, suppose that  for all $R>0$ there exists $a \in L^{1}(\Om)$ such that 
\begin{equation}\label{condition_F_symetric_caseII}
|F(x,\zeta)| \leq a(x) \hspace{0.3cm} \mbox{for a.a. $x\in \Om$ and $\zeta \in \R^{N}$ such that $|\zeta_i| \leq R$ for all $i=1,...,N$}.
\end{equation}
Then,  arguing as in the proof of Theorem \ref{existence0}, we get the existence of a solution $m=(m_1,...,m_N)$, $w=(w_1,...,w_N)$ of \eqref{prob:PN}, and so, mimicking the proof of Theorem \ref{existence_MFG_without_density_constraints}, we get the following result:
\begin{proposition}\label{multipopulation_result_I} Suppose that $f^{i}$, $i=1,...,N$, satisfy \eqref{global_potential}, with $F$ satisfying \eqref{condition_F_symetric_caseI}-\eqref{condition_F_symetric_caseI}. Then, system  \eqref{MFGq_multipopulation} admits at least one solution $m=(m_1,..., m_{N})$, $u=(u_{1},...,u_N)$ and $\lambda=(\lambda_1,...,\lambda_N)$, where, for all $i=1,..., N$,  $m_{i} \in W^{1,q}(\Om)$ and $u_i \in W^{1,s}(\Om)$ {\rm(}for all $s \in (1, d/(d-1)${\rm)}.
\end{proposition}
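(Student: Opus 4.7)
The plan is to reduce \eqref{MFGq_multipopulation} to the variational problem \eqref{prob:PN} and then to mimic, component-wise, the proof of Theorem \ref{existence_MFG_without_density_constraints}. Set
\[
\mathcal{G}(m) := \int_\Omega F(x, m_1(x), \ldots, m_N(x))\dd x
\]
and consider the functional $\mathcal{J}_N(m,w) := \sum_{i=1}^N [\mathcal{B}_q(m_i,w_i) + \chi_{G^{-1}(0)}(m_i,w_i)] + \mathcal{G}(m)$ on $(W^{1,q}(\Omega) \times L^q(\Omega)^d)^N$. The feasible point $((1/|\Omega|,0),\ldots,(1/|\Omega|,0))$ yields a finite value (using \eqref{growthhstar} for $\mathcal{B}_q$ and \eqref{condition_F_symetric_caseII} for $\mathcal{G}$), so the infimum in \eqref{prob:PN} is finite.

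For existence of a minimizer, first I would verify that $\mathcal{G}$ is weakly continuous on bounded subsets of $W^{1,q}(\Omega)^N$: since $q>d$, the embedding $W^{1,q}(\Omega) \hookrightarrow C(\overline{\Omega})$ is compact, so any weakly convergent sequence $m^n \rightharpoonup m$ in $W^{1,q}(\Omega)^N$ converges uniformly, and since $F$ is Carath\'eodory with the uniform-in-bounded-$\zeta$ integrable bound from \eqref{condition_F_symetric_caseII}, dominated convergence yields $\mathcal{G}(m^n) \to \mathcal{G}(m)$. Next, mimicking the argument leading to \eqref{estimation_with_the_cost_1} in Theorem \ref{existence0}, the lower bound \eqref{condition_F_symetric_caseI} plays the role of $C_\mathcal{F}$: for a minimizing sequence $(m^n,w^n)$, I would derive uniform bounds $\|w_i^n\|_q^q \leq C + C\|m_i^n\|_\infty^{q-1}$ for each $i$, combine with the Sobolev estimate $\|m_i^n\|_\infty \leq C(\|w_i^n\|_q + 1)$ coming from \eqref{estimacion_w1q} and the embedding into $C(\overline{\Omega})$, and obtain uniform $W^{1,q}$ bounds on each $m_i^n$. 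Extracting weakly convergent subsequences and using weak lower semicontinuity of each $\mathcal{B}_q$ (Theorem \ref{thm:subdiff}(i)), weak closedness of $G^{-1}(0)$, and weak continuity of $\mathcal{G}$, gives the minimizer $(m,w)$.

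For the first-order optimality conditions, I would fix $i$ and vary only $(\tilde m_i, \tilde w_i)$; the pair $(m_i, w_i)$ then minimizes the partial functional
\[
(\tilde m, \tilde w) \mapsto \mathcal{B}_q(\tilde m, \tilde w) + \chi_{G^{-1}(0)}(\tilde m, \tilde w) + \int_\Omega F(x, \tilde m, (m_j)_{j\neq i})\dd x.
\]
The structure is now identical to that of Theorem \ref{existence_MFG_without_density_constraints}: the convexity trick yields $(-D_i\mathcal{G}(m), 0) \in \partial\hat{\mathcal{B}}_q(m_i,w_i)$, the sum rule of \cite[Theorem 9.5.4(b)]{AttButMic} applies since $\mathcal{B}_q$ is continuous at $(1/|\Omega|, 0)$, the surjectivity of $G$ from \cite[Lemma A.1]{MesSil} identifies the normal cone to $G^{-1}(0)$, and Theorem \ref{thm:subdiff}(ii) (after establishing $m_i > 0$ on $\overline{\Omega}$ via the Harnack-type Lemma \ref{harnack_neumann}) gives a singleton subdifferential. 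This produces Lagrange multipliers $(u_i, \lambda_i)\in Y\times \R$ solving the $i$-th HJB equation with r.h.s. $\partial_{\zeta_i}F(\cdot, m) = f^i(\cdot, m)$ and the $i$-th Kolmogorov equation. The regularity $u_i \in W^{1,s}(\Omega)$ for $s\in(1,d/(d-1))$ is then obtained from Stampacchia's theorem, as in Corollary \ref{existence_standard_MFGs}, noting that $f^i(\cdot, m) \in L^1(\Omega)$ by the Carath\'eodory property and boundedness of each $m_j$.

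The main subtle point I anticipate is justifying the identity $D_i\mathcal{G}(m) = f^i(\cdot, m)$ as a G\^ateaux derivative in $W^{1,q}(\Omega)$: the assumptions \eqref{condition_F_symetric_caseI}-\eqref{condition_F_symetric_caseII} only control $F$ itself, not its derivatives, so one must either strengthen the hypotheses to include a local $L^1$ bound on $\partial_{\zeta_i}F = f^i$ (exactly the analog of assumption (iv) imposed after \eqref{eq:F_model} in the single-population case) to apply dominated convergence to the difference quotient, or note that such a bound is implicit in the Carath\'eodory assumption on $f^i$ together with local boundedness of $m$ in $C(\overline{\Omega})$. Once this point is settled, every other step is a direct transcription of the single-population arguments, applied one component at a time.
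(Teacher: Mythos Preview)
Your proposal is correct and follows exactly the route the paper takes: the paper's entire ``proof'' is the sentence preceding the proposition, which says one argues as in Theorem~\ref{existence0} for existence of a minimizer of \eqref{prob:PN} and then mimics Theorem~\ref{existence_MFG_without_density_constraints} for the optimality conditions; you have simply written out those two steps in detail, including the component-wise reduction for the optimality system. Your remark about the G\^ateaux differentiability of $\mathcal{G}$ (and the $L^1$ integrability of $f^i(\cdot,m(\cdot))$ needed for Stampacchia's result) is a genuine subtlety that the paper does not address --- assumptions \eqref{condition_F_symetric_caseI}--\eqref{condition_F_symetric_caseII} bound only $F$, not $\partial_{\zeta_i}F$ --- and you are right that one must either add the analogue of condition~(iv) after \eqref{eq:F_model}, or appeal to the Carath\'eodory property of $f^i$ together with $m\in C(\overline{\Omega})^N$ and an implicit local boundedness of $f^i$ in $\zeta$.
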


Note that \eqref{global_potential} is restrictive. On the other hand, the previous result does not require the strong boundedness condition \eqref{multipopulation_hyp2} and the monotonicity assumption 
\eqref{multipopulation_hyp3}. Moreover, this framework allows us to introduce density constraints of the form $m \in \cK$, where 
$$
\cK:=  \left\{ m \in W^{1,q}(\Om)^{N} \; ; \; \sum_{i=1}^{N}\alpha_{i}m_{i}(x) \leq \kappa(x) \right\} .
$$
We suppose that  $\kappa \in W^{1,q}(\Om)$ satisfies  $\kappa(x) >0$, for all $x\in \ov{\Om}$,  and the {\it weights} $(\alpha_i)_{i=1}^N$ satisfy 
\begin{equation}\label{conditions_on_the_weights}
\alpha_i \geq 0, \; \; \forall \; i=1,...,N, \; \; \exists \; \bar{i}\in \{1,..., N\} \; \mbox{such that $\alpha_{\bar{i}}>0$} \; \;  \mbox{and } \sum_{i=1}^{N} \alpha_i < \|\kappa\|_{1}.  
\end{equation} 
Condition \eqref{conditions_on_the_weights} implies that if for all $i=1,...,N$ we define $\hat{m}_i:= \kappa/\|\kappa\|_1$ and $\hat{w}_i:= \hat{w}$, where $\hat{w}$ is such that $B\hat{w} =-A\kappa/\|\kappa\|_1$ (we know that such $\hat{w}$ exists by \cite[Lemma A.1]{MesSil}), then $\hat{m}:=(\hat{m}_1,..., \hat{m}_{N})$, $\hat{w}:=(\hat{w}_1, ..., \hat{w}_{N})$ are feasible for problem \small
\begin{equation}\label{prob:PN_alternative}\begin{array}{l}
\inf_{m_i \in W^{1,q}(\Om), \; w_i\in L^{q}(\Om)^d \atop
i=1,...,N} \; \sum_{i=1}^{N} \left[\cB_{q}(m_{i},w_{i}) + \chi_{G^{-1}(0)}(m_{i},w_{i})\right] + \int_{\Om}F(x,m_1(x),...,m_{N}(x))\dd x, \tag{$P'_N$}\\[6pt]
\mbox{s.t. } \hspace{3cm} m \in \cK.
\end{array}
\end{equation} \normalsize 
and $\hat{m}$ is an interior point to the constraint $m \in \cK$, i.e. $\sum_{i=1}^{N}\alpha_i \hat{m}_i (x) <\kappa(x)$ for all $x\in \ov{\Om}$. Therefore, we can argue as in the proof of Theorem \ref{existence0} to show the existence of at least one solution of \eqref{prob:PN_alternative} and then, following the proof of Theorem \ref{existence0} (see also the proofs of Theorem 4.1 and Corollary 4.2 in \cite{MesSil}), we get the following result:
\begin{proposition}
Suppose that for all $i=1, \hdots, N$ the function $f^i$ satisfies the assumptions of Proposition \ref{multipopulation_result_I}. Moreover, assume that \eqref{conditions_on_the_weights} holds. Then system  
\begin{equation}\label{MFGq_multipopulation_case_density_constraints}
\left\{
\begin{array}{rll} 
- \Delta u_{i}+ H^i(\cdot,\nabla u_{i}) -\alpha_i p+\lambda_i &= f^i(x,(m_{i})_{i=1}^{N}), &  \iin\ \Om, \\[6pt]
\nabla u_{i}\cdot n &= 0,  & \oon\ \partial \Om, \\[6pt]
-\Delta m_{i} - \diver\left(m_{i} \nabla_\xi H^i(\cdot,\nabla u_{i})\right)  &= 0,  & \iin\ \Omega, \\[6pt]
(\nabla m_{i}+m_{i} \nabla_\xi H^i(\cdot,\nabla u_{i}))\cdot n &= 0,   & \oon\ \partial\Omega,\\[6pt]
\ds\int_\Omega m_{i}\dd x=1,   &     m_{i}(x)> 0,  &  \iin\ \ov{\Omega},\\[6pt]
i=1, ..., N,
\end{array}\tag{$MFG'_N$}
\right.  
\end{equation}
with 
$$
\sum_{i=1}^{N}\alpha_im_i(x)\leq  \kappa(x) \; \; \mbox{{\rm for all} $x\in \ov{\Om}$}, \; \; \;  p \geq 0 \; \; \mbox{{\rm and }}  \mbox{{\rm spt}}(p) \subseteq  \left\{x \in \ov{\Om} \; ; \; \sum_{i=1}^{N}\alpha_im_i(x)=  \kappa(x)\right\} ,
$$
admits at least one solution $m=(m_1,..., m_{N})$, $u=(u_{1},...,u_N)$, $\lambda=(\lambda_1,...,\lambda_N)$ and $p$, where, for all $i=1,..., N$,  $m_{i} \in W^{1,q}(\Om)$, $u_i \in W^{1,s}(\Om)$ {\rm(}for all $s \in (1, d/(d-1)${\rm)} and $p \in  \sM(\ov{\Om}) $.
\end{proposition}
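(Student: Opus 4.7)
The plan is to mirror the variational derivation of Theorems \ref{existence0}--\ref{existence_MFG_with_density_constraints}, treating \eqref{prob:PN_alternative} as a convex-composite problem whose first-order conditions yield \eqref{MFGq_multipopulation_case_density_constraints}. First I establish the existence of a minimizer. The pair $(\hat m,\hat w)$ described just before the statement is feasible and has finite cost by \eqref{growthhstar}, \eqref{condition_F_symetric_caseII} and \eqref{assumptions_on_kappa}. For a minimizing sequence $((m_i^n,w_i^n))_{i,n}$, the lower bound \eqref{condition_F_symetric_caseI} isolates each $\cB_q$-term from below, after which the bootstrap used in the proof of Theorem \ref{existence0}---combining \eqref{growthhstar} on $\{m_i^n>0\}$ with \eqref{estimacion_w1q} and the Sobolev embedding $W^{1,q}(\Om)\hookrightarrow C(\ov\Om)$, which relies on $q>d$---yields uniform bounds $\|m_i^n\|_{1,q}+\|w_i^n\|_q\le C$. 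A weak limit $(m^*,w^*)$ preserves $G(m_i^*,w_i^*)=0$ and $m^*\in\cK$ (the latter by compactness of $W^{1,q}\hookrightarrow C(\ov\Om)$), and its minimality follows from Theorem \ref{thm:subdiff}(i) applied componentwise to $\cB_q$, together with \eqref{condition_F_symetric_caseII} and dominated convergence for $\int F$.

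Next, at the minimizer I write the Fermat inclusion $0\in\partial J(m^*,w^*)$ for
\[
J=\sum_{i=1}^N\bigl[\cB_q(m_i,w_i)+\chi_{G^{-1}(0)}(m_i,w_i)\bigr]+\int_\Om F(x,m)\,\dd x+\chi_\cK(m).
\]
Since $\cB_q$ is finite and continuous at the interior point $(1/|\Om|,0)$ (using again $q>d$), while $\chi_{G^{-1}(0)}$ and $\chi_\cK$ are finite there thanks to the feasibility of $\hat m$ and the strict inequality $\sum_j\alpha_j\hat m_j<\kappa$ guaranteed by \eqref{conditions_on_the_weights}, the Moreau--Rockafellar sum rule \cite[Theorem 9.5.4(b)]{AttButMic} splits $\partial J$ into its four pieces. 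By Theorem \ref{thm:subdiff}(ii), the $\cB_q$-contribution in slot $i$ is a singleton depending on $v_i=w_i^*/m_i^*$; the surjectivity of $G$ from \cite[Lemma A.1]{MesSil} yields $\cN_{G^{-1}(0)}(m_i^*,w_i^*)=\{(A^*\hat u_i+\hat\lambda_i,-\nabla\hat u_i):(\hat u_i,\hat\lambda_i)\in Y\times\R\}$, as in the proof of Theorem \ref{existence_MFG_without_density_constraints}; and $D\!\int F$ contributes $f^i(\cdot,m^*)$ in the $m_i$-slot.

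The new ingredient is $\partial\chi_\cK(m^*)$. Because $q>d$ forces $W^{1,q}(\Om)\hookrightarrow C(\ov\Om)$, the constraint $m\in\cK$ reduces to a pointwise inequality between continuous functions, and the Slater point $\hat m$ provided by \eqref{conditions_on_the_weights} yields, via a standard Hahn--Banach argument,
\[
\cN_\cK(m^*)=\bigl\{(\alpha_1 p,\dots,\alpha_N p):p\in\sM_+(\ov\Om),\ \operatorname{spt}(p)\subseteq\{\textstyle\sum_j\alpha_j m_j^*=\kappa\}\bigr\}.
\]
Assembling the four contributions and performing the sign relabeling $u_i\mapsto-u_i$, $\lambda_i\mapsto-\lambda_i$ of Theorem \ref{existence_MFG_without_density_constraints} produces the HJB equation with $-\alpha_i p$ on the left-hand side and, from the $w$-slot, the weak Fokker--Planck equations via $w_i^*=-m_i^*\nabla_\xi H^i(\cdot,\nabla u_i)$. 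Strict positivity $m_i^*>0$ on $\ov\Om$ then follows from Lemma \ref{harnack_neumann}, using $\nabla_\xi H^i(\cdot,\nabla u_i)\in L^q(\Om)^d$ from Lemma \ref{basiclemmaonH}, and the regularity $u_i\in W^{1,s}(\Om)$ for $s\in(1,d/(d-1))$ follows from \cite[Th\'eor\`eme 9.1]{stampacchia3} exactly as in Corollary \ref{existence_standard_MFGs}. The hardest part is precisely the identification of $\partial\chi_\cK(m^*)$ as a single scalar nonnegative Radon measure $p$ common to all $i$, entering each HJB equation with the weight $\alpha_i$: this relies simultaneously on the Slater point from \eqref{conditions_on_the_weights} and on the continuous-function embedding, without which one could not represent the multiplier as a measure supported on the coincidence set.
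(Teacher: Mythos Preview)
Your proposal is correct and follows the same route the paper itself indicates: the paper does not give a self-contained proof but merely says to ``argue as in the proof of Theorem \ref{existence0}'' for existence of a minimizer of \eqref{prob:PN_alternative}, and then to follow the derivation of optimality conditions as in Theorems \ref{existence_MFG_without_density_constraints}--\ref{existence_MFG_with_density_constraints} (and \cite[Theorem 4.1, Corollary 4.2]{MesSil}). Your write-up is in fact more explicit than the paper's, particularly in identifying $\cN_\cK(m^*)$ via the Slater point $\hat m$ from \eqref{conditions_on_the_weights} as $\{(\alpha_1 p,\dots,\alpha_N p):p\in\sM_+(\ov\Om),\ \spt(p)\subseteq\{\sum_j\alpha_j m_j^*=\kappa\}\}$, which is exactly the step the paper defers to \cite{MesSil}; note also that your use of the $P_1$-style bootstrap (relying on the global lower bound \eqref{condition_F_symetric_caseI}) rather than the $P_2$-style a priori bound is the right choice here, since \eqref{conditions_on_the_weights} allows some $\alpha_i=0$ and hence the joint constraint does not by itself bound every $m_i^n$.
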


\vspace{0.3cm}

{\sc Acknowledgements}

\vspace{0.3cm}

The second author was partially  supported by the ANR (Agence Nationale de la Recherche)  project ANR-16-CE40-0015-01 and by the Gaspar Monge Program for Optimization and Operation Research (PGMO) via the project {\it PASTOR}. Both authors were partially supported by the PGMO via the project {\it VarMFGPDE}. The authors also thank M. Cirant for suggesting them to look carefully whether some Harnack inequality holds in their framework. This observation led the authors to find the useful references in \cite{MR3443169} and improve the results in the present paper.

\bibliographystyle{alpha}
\bibliography{alparsilva}{}

\appendix
\section{ }
We prove in this appendix a lemma which is crucial to show the strict positivity of the densities in \eqref{eq:MFG_1} and \eqref{eq:MFG_2}. 

\begin{lemma}\label{harnack_neumann}  Assume  that  $\Om\subset\R^d$ satisfies the assumptions at the beginning of Section \ref{prelim},  $q>d$ and let $v \in L^{q}(\Om)^{d}$. Suppose that  $m \in W^{1,q}(\Om)$ is a weak solution of 
$$\left\{
\begin{array}{rcl}
-\Delta m + \mbox{{\rm div}}(vm) &=& 0 \hspace{0.5cm} \mbox{ {\rm in} } \Omega, \\[4pt]
 \left(\nabla m -bm\right)\cdot n &=& 0 \hspace{0.5cm} \mbox{ {\rm on} } \Omega. \end{array}\right.
$$
Then, $m(x) >0$ for all $x\in \ov{\Om}$
\end{lemma}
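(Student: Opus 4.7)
In the applications in Theorems \ref{existence_MFG_without_density_constraints} and \ref{existence_MFG_with_density_constraints}, $m$ is known to satisfy $m\ge 0$ (since $\cB_q(m,w)<+\infty$) and $\int_\Om m\dd x=1$, so we may assume throughout that $m$ is nonnegative and does not vanish identically. The equation itself can be written in divergence form as $-\diver(\nabla m-vm)=0$ with Neumann datum $(\nabla m-vm)\cdot n=0$, and this is a linear uniformly elliptic equation with a drift-type first-order coefficient $v\in L^q(\Om)^d$; since $q>d$, this falls precisely within the class of operators treated by the Harnack inequalities of Trudinger \cite{MR0369884} and by the Neumann version in \cite{MR3443169}.

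The plan is in two steps. First, I would prove that $m>0$ \emph{in} $\Om$ by an interior Harnack argument. Trudinger's result yields that for every $x_0\in\Om$ and every ball $B_{2r}(x_0)\subset\Om$ there is a constant $C=C(r,\|v\|_q)$ such that
\[
\sup_{B_r(x_0)}m\le C\inf_{B_r(x_0)}m.
\]
A standard chain-of-balls argument (covering any compact subset of $\Om$ by a finite chain of such balls and using connectedness of $\Om$) implies that the set $\{x\in\Om:m(x)>0\}$ is both open and (relatively) closed in $\Om$, so it either is empty or equals $\Om$. The first possibility is excluded by $\int_\Om m\dd x=1$, hence $m>0$ throughout $\Om$.

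The harder step is the extension of strict positivity up to $\partial\Om$; this is where the $C^{1,1}$-regularity of the boundary (equivalently, the uniform interior and exterior ball conditions recalled at the beginning of Section~\ref{prelim}) enters decisively. For a given boundary point $x_0\in\partial\Om$, I would first straighten the boundary locally via a $C^{1,1}$ diffeomorphism, so that $\Om$ becomes the upper half-space in a neighborhood of $x_0$ and the outward normal becomes $-e_d$; the PDE remains in divergence form with bounded, Lipschitz principal coefficients and an $L^q$-drift. Then the Neumann condition $(\nabla m-vm)\cdot n=0$ on the flattened piece of boundary is exactly what is needed to perform an \emph{even} reflection $\tilde m(x',x_d):=m(x',|x_d|)$ (with a matching odd reflection of the drift) yielding a nonnegative weak solution of a divergence-form equation on a full interior ball around $x_0$; this is the construction used in \cite{MR3443169} to prove a boundary Harnack inequality for Neumann problems with $L^q$ drift. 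Applying the interior Harnack of \cite{MR0369884} to $\tilde m$ on that ball, combined with the already-established strict positivity of $m$ in the interior, yields $m(x_0)>0$.

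The main obstacle is precisely this last point: verifying that the change of variables and the reflection produce an admissible operator to which Trudinger's Harnack inequality applies, with constants controlled uniformly by the $C^{1,1}$ atlas of $\partial\Om$ and by $\|v\|_q$. Once this is done, continuity of $m$ (guaranteed by $m\in W^{1,q}(\Om)\hookrightarrow C(\ov\Om)$ since $q>d$) upgrades the pointwise strict inequality $m(x_0)>0$ into a uniform lower bound $m\ge\delta_{x_0}>0$ on a boundary neighborhood of $x_0$, and compactness of $\partial\Om$ then concludes $m>0$ on all of $\ov\Om$.
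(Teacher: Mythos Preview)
Your proposal is correct and follows essentially the same two-step strategy as the paper: interior Harnack from \cite{MR0369884} for strict positivity in $\Om$, then a reflection argument across $\partial\Om$ to reduce the boundary case to an interior one. The only difference is in the implementation of the reflection: you first straighten the boundary to a half-space via a $C^{1,1}$ diffeomorphism and then do an even reflection, whereas the paper (following \cite{Nit}) works directly with the Lipschitz graph representation $(y,\psi(y)+s)\mapsto(y,\psi(y)-s)$ without flattening. The advantage of the paper's choice is that the principal part stays the Laplacian after reflection (only the drift changes, to $DS(x)\ov v(x)$ on the reflected part), so one never has to track a variable-coefficient matrix $A$; your route is equally valid but requires checking that the transformed coefficients and their reflections remain admissible for Trudinger's Harnack, exactly the ``main obstacle'' you identify.
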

\begin{proof}Since $v \in L^{q}(\Om)^{d}$ ($q>d$), by the Harnack's inequality proved in \cite{MR0369884} (see also \cite[Corollary 1.7.2]{MR3443169}) we have that $m(x)>0$ for all $x\in \Om$. It remains to study the positivity of $m$ on the boundary $\partial \Om$. To achieve this, we use a standard reflection argument following \cite{Nit}. Let $x\in \partial \Om$.  Since $\Om$ is supposed to be Lipschitz domain, in a neighborhood of $x$  the boundary $\partial\Om$ can be represented as the graph of a Lipschitz function $\psi:\R^{d-1}\to\R$. Without loss of generality for these local considerations we may suppose that $x=0$ and that
$$B^d_r(x)\cap\partial\Omega=\{(y,\psi(y)): y\in B^{d-1}_r(0)\},$$ 
for some $r>0$ small (otherwise one can perform some isometric transformations on $\Om$ under which all the hypotheses that we assumed on the data remain invariant). We have denoted by $B^{d}_r(z)$ and $B^{d-1}_r(z)$  the balls of radius $r$ centered at $z$ in $\R^d$ and $\R^{d-1}$, respectively. 

Consider a strip-like domain
$$G:=\left\{ (y,\psi(y)+s): y\in B^{d-1}_r(0), s\in(-r,r) \right\},$$
 such that 
$$\Om\cap G=\left\{ (y,\psi(y)+s): y\in B^{d-1}_r(0), s\in(0,r) \right\}.$$
Defining $T:B^{d-1}_r(0)\times(-r,r)\to G$ as $T(y,s):=(y,\psi(y)+s),$ we have that $T$ is a bi-Lipschitz map (an injective Lipschitz continuous map whose inverse is also Lipschitz continuous) and 
$$DT(y, s)=\left[
\begin{array}{cc}
I_{d-1} & 0\\
\nabla\psi(y)^\top & 1
\end{array}
\right]\ \ \ \text{and}\ \ \ DT(y, s)^{-1}=\left[
\begin{array}{cc}
I_{d-1} & 0\\
-\nabla\psi(y)^\top & 1
\end{array}
\right],$$
for a.e. $(y,s)\in B_r^{d-1}(0)\times(-r,r)$ ($I_{d-1}\in \R^{(d-1)\times(d-1)}$ denotes the identity matrix). Define the {\it reflection map} $S:G\to G$ as 
\begin{equation}\label{reflection_definition}
S(T(y,s))=T(y,-s),
\end{equation}
 which clearly leaves the points on $\partial\Om\cap G$ invariant and satisfies that $S(S(x))=x$ for all $x\in G$. Differentiating  both sides of \eqref{reflection_definition} yields  for a.e. $(y,s)\in B_r^{d-1}(0)\times(-r,r)$ 
$$
DS(T(y,s))DT(y,s)=DT(y,-s)\left[
\begin{array}{cc}
I_{d-1} & 0\\
0_{\R^{d-1}}^\top & -1
\end{array}
\right],
$$
from where we get
$$
DS(T(y,s))=DT(y,-s)\left[
\begin{array}{cc}
I_{d-1} & 0\\
0_{\R^{d-1}}^\top & -1
\end{array}
\right]
DT(y,s)^{-1}=\left[
\begin{array}{cc}
I_{d-1} & 0\\
2\nabla\psi(y)^\top & -1
\end{array}
\right].
$$
Thus, $\det (DS(x))=-1,$ $DS(x)^{-1}=DS(x)$, for every $x\in G$,  and $DS(\cdot)$ is bounded on $G$. Note that since $DS(T(y,s))$ does not depend on $s$ we have that $DS(Sx)=DS(x).$

Let us introduce some  notations. We set $U:=G\cap\Omega,$ $V:=G\setminus\ov\Om=S(U)$ and for $b:U\to\R$, we define $\ov b:V\to U$ as $\ov b(x):=b(S(x))$ and
$$
\tilde b(x)=\left\{
\begin{array}{ll}
b(x), & x\in U,\\
\ov b(x), & x\in V,
\end{array}
\right.
$$    
the a.e. extension of $b$ to $G$ (the definition on $\partial \Omega \cap G$ is irrelevant since this set is $\sL^d$-negligible). Arguing as in \cite[Lemma 3.3]{Nit}, if $b\in W^{1,q}(U)$, we have that $\tilde b\in W^{1,q}(G)$, $\nabla \tilde b=\nabla b\one_U+\nabla\ov b\one_V$ and $\nabla \ov b(x)=DS(x)\nabla b (S(x))$ for a.e. $x\in G$.

Now, since   $(m,v)$ satisfies  \eqref{principalecweakform} (with $w=vm$) for tests functions $\varphi \in C^{1}(\ov \Omega)$, defining $\hat v \in L^{q}(G)^{d}$ as
$$
\hat v(x)=\left\{
\begin{array}{ll}
v(x), & x\in U,\\
DS(x) \ov v(x), & x\in V,
\end{array}
\right.
$$ 
(where $\ov v$ is understood componentwise), the pair $(\tilde m,\hat v)$ solves a similar equation on $G$ with test functions $\varphi\in C_c^1(G)$. Indeed, let us take  $\varphi\in C_c^1(G)$ and compute
\begin{equation}
\begin{array}{ll}
\int_G\left( \nabla\tilde m\cdot \nabla\varphi-\tilde m \hat v\cdot\nabla\varphi\right) \dd x & =\int_U \left( \nabla\tilde m\cdot \nabla\varphi-\tilde m \hat v\cdot\nabla\varphi\right) \dd x + \int_V \left( \nabla\tilde m\cdot \nabla\varphi-\tilde m \hat v\cdot\nabla\varphi\right) \dd x\\[8pt]
\; &= \int_U \left( \nabla m\cdot \nabla\varphi- mv\cdot\nabla\varphi\right) \dd x\\[8pt]
\; & \; \; \; + \int_V \left( \nabla \ov m(x)\cdot \nabla\varphi(x)-\ov m(x)\nabla\varphi(x)^\top DS(x)\ov v(x)\right) \dd x\\[8pt]
\;&= \int_V  \nabla\varphi(x)^\top DS(x)\nabla m(S(x)) \dd x\\[8pt]
\; &\; \; \; - \int_V m(S(x)) \nabla\varphi(x)^\top DS(x) v(S(x))  \dd x\\[8pt]
\;&=\int_U \nabla\varphi(S(y))^\top DS(S(y))\nabla m(y) \dd y \\[8pt]
\; &\; \; \; - \int_{U} m(y) \nabla\varphi(S(y))^\top DS(S(y)) v(y) \dd y\\[8pt]
\;&=\int_U \left(\nabla m(y)- m(y) v(y)\right)\cdot\nabla\ov \varphi(y) \dd y\\[8pt]
\; & =0,
\end{array}
\end{equation}
where we have used the fact that both $\varphi$ and $\ov\varphi$ (restricted to $U$) are admissible test functions in \eqref{principalecweakform}, a change of variable in the integrals and the properties that we have shown for $S$.

The regularity of $\hat v$ and   \cite[Corollary 1.7.2]{MR3443169} imply that $m(x)>0$. The result follows.
\end{proof}

\end{document}